\theoremstyle{plain}
\newtheorem{THM}{Theorem}[section]
\newtheorem{PROP}[THM]{Proposition}
\newtheorem{LEM}[THM]{Lemma}
\newtheorem{COR}[THM]{Corollary}
\theoremstyle{definition}
\newtheorem{DEF}[THM]{Definition}
\newtheorem{RMK}[THM]{Remark}
\newtheorem{EX}[THM]{Example}
\newtheorem{OBS}[THM]{Observation}
\newlist{PROPenum}{enumerate}{1} % should only occur inside definition env.
\setlist[PROPenum]{label=(\roman*),ref=\thePROP\,(\roman*)}
\crefname{PROPenumi}{Proposition}{Propositions}
\DeclareMathOperator{\id}{id}
\DeclareMathOperator{\Out}{Out}
\DeclareMathOperator{\PHEquiv}{\thicksim_\text{PHE}}
\DeclareMathOperator{\Homeo}{Homeo}
\DeclareMathOperator{\rad}{rad}
\DeclareMathOperator{\rk}{rk}
\newcommand{\N}{\mathbb{N}}
\newcommand{\R}{\mathbb{R}}
\newcommand{\cG}{\mathcal{G}}
\newcommand{\cV}{\mathcal{V}}
\newcommand{\cU}{\mathcal{U}}
\newcommand{\cP}{\mathcal{P}}
\newcommand{\cS}{\mathcal{S}}
\newcommand{\MG}{\mathcal{MG}}
\def\G{{\Gamma}}
\def\e{{\epsilon}}
\newcommand{\inv}{^{-1}}
\newcommand{\arr}{\rightarrow}
\newcommand{\defeq}{:=}
\newcommand{\isom}{\cong}
\newcommand{\del}{\partial}
\title{The Complexity of Proper Homotopy Equivalence of Graphs}
\author{Hannah Hoganson, Jenna Zomback}
\date{\today}
\begin{document}

\begin{abstract}
    We demonstrate that the proper homotopy equivalence relation for locally finite graphs is Borel complete.
    Furthermore, among the infinite graphs, there is a comeager equivalence class.
    As corollaries, we obtain the analogous results for the homeomorphism relation of noncompact surfaces with pants decompositions.
\end{abstract}

\maketitle

\section{Introduction}

The mapping class group of a surface is the group of homeomorphisms of that surface considered up to homotopy. The group of outer automorphisms of a free group can be realized as the group of homotopy equivalences of a graph whose fundamental group is that free group. These two groups of symmetries are closely related and classical objects of study in geometric group theory. Traditionally, the study of these objects focused on \emph{finite-type} surfaces, those with finitely generated fundamental groups, and finite graphs. The corresponding groups of symmetries are then finitely generated groups. Recently, there has been a considerable rise in interest in \emph{infinite-type} surfaces, those with infinitely-generated fundamental group, and their mapping class groups. In parallel, Algom-Kfir and Bestvina introduced the mapping class group of an infinite graph, a geometric infinite-type analog of $\Out(F_n)$, in \cite{AB2021}. 

The mapping class group of a graph is defined to be the group of proper homotopy equivalences, up to proper homotopy. Thus, graphs which are themselves proper homotopy equivalent have isomorphic mapping class groups, just as homeomorphic surfaces have isomorphic mapping class groups.  So, it is natural to ask the parallel questions: \begin{enumerate}
    \item ``How hard is it to recognize when two surfaces are homeomorphic?" and
    \item ``How hard is it to recognize when two graphs are proper homotopy equivalent?" 
\end{enumerate}

To make the above questions precise, we use the terminology of complexity of equivalence relations from descriptive set theory. 
In general, given an equivalence relation $E$ on a standard Borel space $X$, we say that $E$ is at most as complicated as the equivalence relation $F$ on standard Borel space $Y$, and we write $E \leq_B F$, if there is a \textit{Borel reduction} $\phi: X \to Y$ in the sense that 
\[ 
x \; E \; y \iff \phi(x) \; F \; \phi(y).
\]

Intuitively, existence of such a reduction says that to check if $x$ and $y$ are $E$-related, we can pass the points to $Y$ via $\phi$ and check if they are $F$-related.
Thus, $E$ cannot be any ``harder" of a classification problem than $F$.
The condition that the reduction must be Borel is to ensure that the reduction is constructive. Otherwise, assuming that there are at least as many $F$-classes as there are $E$-classes, we could always use the axiom of choice to map each $E$-class to a unique $F$-class.
If we have $E \leq_B F$ and $F \leq_B E$, we say that $E$ and $F$ are \textit{bireducible} and we write $E \equiv_B F$.
Studying the relative complexities of equivalence relations up to bireducibility, has been a major recent focus of descriptive set theory.

\begin{EX}\label{notable_equivalence_rels}
The following are some important points in the hierarchy of analytic equivalence relations in increasing order of complexity.
    \begin{itemize}
    \item  All orbit equivalence relations of countable groups reduce to the shift action of $\mathbb{F}_2$, the free group on two generators, on $2^{\mathbb{F}_2}$. This is due to Dougherty, Jackson, and Kechris in \cite{DJK94}. %This orbit equivalence relation is sometimes called $\mathbb{E}_\infty$.
    \item All orbit equivalence relations of non-archimedean group actions reduce to a universal $S_\infty$ action called $C_\infty$. This is due to Becker and Kechris in \cite{BK96}. This is the complexity of isomorphism of countable connected graphs.
    \item All orbit equivalence relations of Polish group actions reduce to the homeomorphism relation of compact metric spaces. This is due to Zielinski in \cite{Z16}.
    \item All analytic equivalence relations reduce to the isomorphism relation of separable Banach spaces. This is due to Ferenczi, Louveau, and Rosendal in \cite{FLR09}.
\end{itemize}
\end{EX}

We first realize the set of locally finite graphs as a standard Borel space in two different ways, in \Cref{ssec:space_of_graphs}, which makes proper homotopy equivalence an analytic equivalence relation.
Our main result is the following.

\begin{THM}\label{thm:mainIntro}
    Proper homotopy equivalence on the space of locally finite graphs is bireducible with $C_\infty$.  
\end{THM}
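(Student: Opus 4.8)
The plan is to prove the two reductions $\PHEquiv \leq_B C_\infty$ and $C_\infty \leq_B \PHEquiv$ separately. Both rest on the topological classification of the proper homotopy type of a locally finite graph $G$ by its rank in $\N \cup \{\infty\}$ together with the pair of spaces $(\Omega(G), \Omega_\ell(G))$, where $\Omega(G)$ is the space of ends and $\Omega_\ell(G) \subseteq \Omega(G)$ is the closed subspace of ends accumulated by loops; two graphs are properly homotopy equivalent exactly when their ranks agree and the pairs $(\Omega,\Omega_\ell)$ are homeomorphic. This is the graph analogue of the Ker\'ekj\'art\'o--Richards classification of surfaces, and I would either cite it or establish it before this point.

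For the upper bound $\PHEquiv \leq_B C_\infty$, I would record the invariant above as a countable structure. The space of ends $\Omega(G)$ is compact, metrizable and totally disconnected, so by Stone duality its homeomorphism type is captured by the countable Boolean algebra of its clopen subsets; the subspace $\Omega_\ell(G)$ is encoded by a distinguished ideal, and the rank by a constant in $\N \cup \{\infty\}$. The assignment $G \mapsto (\text{clopen algebra},\ \text{ideal},\ \text{rank})$ can be made Borel on the Polish space of locally finite graphs, and under it two graphs are properly homotopy equivalent iff the associated countable structures are isomorphic. Isomorphism of countable structures is the orbit equivalence relation of a non-archimedean Polish group action, hence Borel reduces to $C_\infty$ by Becker--Kechris \cite{BK96}; composing yields $\PHEquiv \leq_B C_\infty$.

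For the lower bound $C_\infty \leq_B \PHEquiv$, which is the mathematical heart, I would restrict to trees, that is, graphs of rank $0$ with $\Omega_\ell = \emptyset$, for which the classification collapses to the statement that two locally finite trees are properly homotopy equivalent iff their end spaces are homeomorphic. The key external input is the theorem of Camerlo and Gao that isomorphism of countable Boolean algebras is Borel complete; by Stone duality this is homeomorphism of compact metrizable totally disconnected spaces, which are precisely the spaces arising as ends of locally finite graphs. I would then prove a realization lemma: there is a Borel assignment sending a countable Boolean algebra $B$ to a locally finite tree $T_B$ whose end space is homeomorphic to $\mathrm{Stone}(B)$, built level by level from a Borel presentation of the clopen algebra, with ends corresponding to the branches surviving to infinity. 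Chaining the equivalences, $B \cong B'$ iff $\mathrm{Stone}(B) \cong \mathrm{Stone}(B')$ iff $T_B \PHEquiv T_{B'}$, exhibiting the desired Borel reduction.

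The main obstacle I anticipate is not the abstract reduction but its two Borel and geometric ingredients: first, making $B \mapsto T_B$ genuinely Borel while ensuring the end space is exactly $\mathrm{Stone}(B)$; and second, verifying the tree version of the classification in the proper homotopy category, so that a homeomorphism of end spaces can be promoted to an honest proper homotopy equivalence and conversely. Handling the degenerate cases (finite or one-point end spaces, and isolated ends forced by local finiteness) is a routine but necessary check. With the classification in hand, both directions meet at the $S_\infty$ level and deliver $\PHEquiv \equiv_B C_\infty$.
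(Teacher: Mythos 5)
Your overall architecture matches the paper's: for the upper bound you encode the rank together with the endspace pair as a countable structure via Stone duality (the paper uses two clopen algebras joined by the restriction surjection where you use a distinguished ideal; these encodings are equivalent), and for the lower bound you combine the Camerlo--Gao theorem \cite{CG01} with a realization lemma. The genuine difference is in the realization. You collapse the classification of \Cref{thm:PHEcriteria} to homeomorphism of end spaces by taking rank $0$ and $E_\ell=\emptyset$, i.e., trees; the paper's \Cref{lem:realizing_closed_subsets} collapses it from the other side, producing graphs of infinite rank with \emph{every} end accumulated by loops by decorating the prefix tree of a closed set $C\subseteq 2^{\N}$ with lollipops. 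Both collapses are legitimate for the statement as phrased, but the paper's choice buys two things. First, it reduces from homeomorphism of closed subsets of $2^{\N}$ (in $\mathcal{K}(2^{\N})$ with the Vietoris topology) rather than from isomorphism of countable Boolean algebras, so the Borelness of $C\mapsto\Gamma_C$ is essentially immediate --- membership in a basic open set of the graph space is decided by finitely many prefixes of $C$ --- whereas your $B\mapsto T_B$ must build a tree from an abstract presentation of $B$, precisely the step you flag as delicate. Second, lollipop-decorated graphs can be made $3$-regular (and, with the modifications of \Cref{lem:k_realizes}, $k$-regular), so the same reduction establishes Borel completeness on each $\cG_k$; trees cannot, since there is only one $k$-regular tree for each $k$. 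For \Cref{thm:mainIntro} on $\cG_{<\infty}$ alone, your tree route goes through, modulo the degenerate cases (empty and singleton end spaces) that you already note need separate handling.
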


We get the same complexity result when considering subspaces of graphs of uniform or bounded degree. 
Our proof relies heavily on the result of Camerlo and Gao in \cite{CG01} that the equivalence relation of homeomorphism on closed subsets of the Cantor set is bireducible with $C_{\infty}$.
The lower bound in our complexity result follows a similar line of reasoning as a response on MathOverflow by Conley, where he outlines a reduction of homeomorphism of open sets in the plane to $C_{\infty}$ using the result of Camerlo and Gao \cite{Con}. 

Other equivalence relations at this level include isomorphism of countable graphs (\cite{H93}) and isomorphism of countable linear orderings (\cite{FS89}). 
These are each examples of isomorphisms of countable structures, which Borel reduce to $C_{\infty}$. 
It should be noted that while being bireducible with $C_{\infty}$ is often referred to as being ``Borel complete," the equivalence relations we are considering are not Borel equivalence relations.

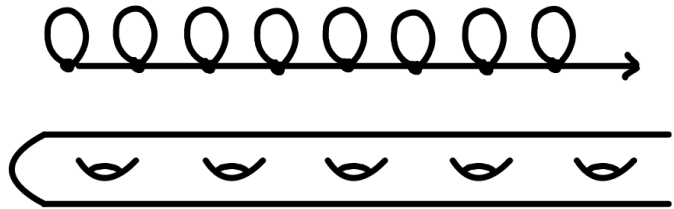
\begin{figure}[h]
    \centering

    \begin{tikzpicture}
   
    % Draw the vertices and loops
    \foreach \i in {0,...,8} {
        \node[circle, draw, inner sep=1.5pt, fill=black] (v\i) at (\i,0) {};
        \draw[thick] (v\i) to[out=45,in=135,looseness=20] (v\i);
    }

    \draw[thick] (v0) -- (v8);

    % Draw the arrow at the end of the path
    \draw[->, thick] (v8) -- (9,0);
\end{tikzpicture}
\vspace{.6cm}

\begin{tikzpicture}
    % Draw the capsule shape
    \draw[very thick] (0,0) -- (9,0);
    \draw[very thick] (0,-1.2) -- (9,-1.2);
    \draw[very thick] (0,0) arc[start angle=90,end angle=270,radius=0.6];
    %\draw[thick] (9,-1) arc[start angle=270,end angle=90,radius=0.5];
    \node at (9.2,-.6) {\dots};
    
    % Draw the wavy pattern inside the capsule
    \foreach \x in {1,3,5,7} {
        \draw[very thick] (\x,-0.4) to[out=270,in=270,looseness=1.4] ++(1,0);
        \draw[very thick] (\x+.15,-0.7) to[out=70,in=110,looseness=1.4] ++(.7,0);
    }
\end{tikzpicture}
\caption{Representations of the Loch Ness monster graph (top) and surface (bottom).}
    \label{fig:LNM}
\end{figure}

The graph formed by taking a ray and attaching infinitely many loops is often called the \emph{Loch Ness monster graph}, see \cref{fig:LNM}. This was after the corresponding surface, constructed from the plane by attaching handles along the positive real axis was named the \emph{Loch Ness monster surface}. Our second result shows that the equivalence classes of this graph is generic among infinite graphs.  

\begin{THM}\label{thm:GenericIntro}
     The proper homotopy equivalence class of the Loch Ness monster graph is comeager in the space of locally finite, infinite graphs.
\end{THM}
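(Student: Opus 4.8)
The plan is to identify the proper homotopy type of the Loch Ness Monster graph by its proper-homotopy invariants and then show the set of graphs realizing them is comeager by exhibiting it as a dense $G_\delta$. Recall from the proper-homotopy classification of locally finite graphs that two such graphs are properly homotopy equivalent precisely when they have the same rank, homeomorphic end spaces, and the same set of ends accumulated by loops. The Loch Ness Monster graph has infinite rank, a single end, and that end is accumulated by loops. Since in a locally finite graph any infinite family of loops must leave every finite subgraph and hence accumulate at an end, a connected graph $G$ is properly homotopy equivalent to the Loch Ness Monster graph if and only if $G$ has exactly one end and $\rk(G)=\infty$. Write $\cL$ for this set; it then suffices to prove that $\cL$ is a dense $G_\delta$ in the space of locally finite, infinite graphs.

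I would first dispatch the two easy halves. For the infinite-rank condition, each set $\{G : \rk(G)\ge n\}$ is open, since exhibiting $n$ independent cycles is witnessed on a finite subgraph and persists under any modification far away; hence $\{G:\rk(G)=\infty\}=\bigcap_n\{G:\rk(G)\ge n\}$ is $G_\delta$. For density I would use a tail-attachment argument. A basic open set $U$ constrains only a finite portion of the graph, so given such a $U$ I would take any graph in $U$ and, altering it only outside the finite window defining $U$, attach a ray carrying a loop at each of its vertices and add edges merging all remaining ends into that ray's end. The result still lies in $U$, is one-ended, and has infinite rank, so it lies in $\cL\cap U$; thus $\cL$ is dense.

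The heart of the argument, and the step I expect to be the main obstacle, is showing that one-endedness is $G_\delta$. My approach is to reformulate it as follows: a connected locally finite graph $G$ is one-ended if and only if for every finite subgraph $F$ there is a finite subgraph $F'\supseteq F$ with $G\setminus F'$ connected. (Given one end, take $F'$ to be $F$ together with the finitely many finite components of $G\setminus F$, so that $G\setminus F'$ is the unique infinite component; conversely, two ends give a finite separator no enlargement of which can reconnect the two infinite sides.) Using the canonical exhaustion $F_0\subseteq F_1\subseteq\cdots$ supplied by the realization of the space of graphs in \Cref{ssec:space_of_graphs}, one-endedness becomes $\bigcap_n P_n$, where $P_n=\{G:\exists\text{ finite }F'\supseteq F_n(G)\text{ with }G\setminus F'\text{ connected}\}$.

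The obstacle is that ``$G\setminus F'$ connected'' is a priori only a $G_\delta$ condition, being an intersection over pairs of vertices of the open conditions of lying on a common path, so the naive bound places one-endedness at level $\Pi^0_3$ rather than $G_\delta$. Resolving this is exactly where the specific coding of \Cref{ssec:space_of_graphs} must be used: I would show that membership in $P_n$ is in fact \emph{open}, by producing a \emph{finite} certificate -- a witnessing $F'$ together with data guaranteeing that the complement $G\setminus F'$ is connected and remains one-ended beyond $F'$ -- and arguing that such a certificate persists on a neighborhood of $G$. If a clean certificate is unavailable in the chosen coding, the fallback is to prove comeagerness directly, via a Banach--Mazur/back-and-forth argument in which the second player at stage $n$ forces both a new independent loop and a reconnection of the complement of $F_n$, yielding dense open sets whose intersection lies in $\cL$. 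In either route, once one-endedness and infinite rank are both $G_\delta$ and $\cL$ is dense, $\cL$ is a dense $G_\delta$, as claimed.
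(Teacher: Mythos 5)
Your architecture is the same as the paper's (\Cref{prop:3generic}, extended in \Cref{prop:kgeneric}): write the class as $\{\text{infinite rank}\}\cap\{\text{one end}\}$, observe that infinite rank is a countable intersection of open dense sets $\{\rk \geq n\}$, and then handle one-endedness. The infinite-rank half and your characterization of the class are fine. The gap is exactly where you predicted it would be: you never establish that your sets $P_n=\{G:\exists\,\text{finite }F'\supseteq B_n(G)\text{ with }G\setminus F'\text{ connected}\}$ are open; you only express the hope that a finite certificate exists, and you offer an uncompleted Banach--Mazur fallback which in any case would yield comeagerness rather than the stated dense $G_\delta$ conclusion. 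Since this is the crux of the whole proof, the argument as written is incomplete.

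The missing observation is elementary but essential: for a connected, locally finite $G$ and a finite $F'$, every component of $G\setminus F'$ contains a vertex adjacent to $F'$, and there are only finitely many such vertices; hence $G\setminus F'$ is connected if and only if these finitely many boundary vertices are pairwise joined by paths in $G\setminus F'$. Finitely many finite paths lie in some ball $B_r(G)$, so the whole certificate (the set $F'$ together with the connecting paths) persists on $\cU_{(G,r)}$, and $P_n$ is indeed open. This is precisely the paper's device: its open sets $V_n$ demand that the finitely many vertices of the $n$-sphere be pairwise connected outside $B_n(\G)$, which is witnessed inside a finite ball and made dense by adding finitely many edges joining the components of $\G\setminus B_r(\G)$. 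One further soft spot: your density argument for $\cL$ itself asks to ``merge all remaining ends'' of an arbitrary graph into the attached ray, which for, say, a tree with a Cantor set of ends requires infinitely many new edges placed at all scales. This can be done in $\cG_{<\infty}$, but it needs to be said how; the paper avoids the issue entirely by proving density of each open set $U_n$, $V_n$ separately, where only finitely many modifications are ever required, and letting the Baire category theorem produce the dense $G_\delta$.
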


We note that while the PHE equivalence class of the Loch Ness monster graph is comeager, this does \textit{not} come from a comeager isomorphism class.
In fact, in the spaces of graphs considered, every isomorphism class is countable.

In order to realize orientable surfaces as a standard Borel space, we endow them with a pants decomposition. Here a pants decomposition can mean a collection of simple closed curves on the surface up to isotopy so that the complementary components are $d$-holed spheres, for fixed, bounded, or finite $d$. See \Cref{ssec:surfaces} for precise definitions with examples.

\begin{COR}\label{cor:mainIntro}
    The equivalence relation of homeomorphism on the space of surfaces with a pants decomposition is bireducible with $C_\infty$.
\end{COR}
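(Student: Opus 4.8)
The plan is to deduce this corollary from \Cref{thm:mainIntro} by exhibiting a pair of Borel reductions between homeomorphism of pants-decomposed surfaces and $\PHEquiv$ on locally finite graphs. To a surface $S$ equipped with a pants decomposition I associate its \emph{dual graph} $G_S$: place one vertex in the interior of each complementary $d$-holed sphere, and one edge across each decomposition curve, so that gluing a piece to a neighbor (or to itself) records an edge (or a loop). Since each piece is a $d$-holed sphere, the vertex sitting in it has degree exactly $d$, so in the fixed, bounded, and finite $d$ settings the assignment $S \mapsto G_S$ lands in graphs of uniform, bounded, and finite degree respectively---precisely the classes for which \Cref{thm:mainIntro} applies. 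The assignment is Borel since $G_S$ is read off locally from the combinatorics of the decomposition.

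The first point I would verify is that $S \mapsto G_S$ reduces homeomorphism to $\PHEquiv$. The dual graph is a spine in the strong sense that $S$ properly deformation retracts onto an embedded copy of $G_S$, thickening each vertex back to its pair of pants and each edge to the corresponding gluing cylinder. Hence a homeomorphism $S \isom S'$ carries one spine onto a properly homotopy equivalent spine, so $G_S \PHEquiv G_{S'}$. The substance of the reduction is the converse, namely that the proper homotopy type of $G_S$ is a \emph{complete} invariant of the homeomorphism type of $S$. Both sides are governed by the same data: by the Kerékjártó--Richards classification, $S$ is determined up to homeomorphism by its genus, its space of ends $E$, and the closed subspace $E_g \subseteq E$ of ends accumulated by genus; while the proper homotopy type of a locally finite graph is determined by its space of ends together with the distribution of its first Betti number across those ends. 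Under the correspondence the ends of $G_S$ are exactly the ends of $S$, each loop of $G_S$ is contributed by a handle of $S$, and an end lies in $E_g$ precisely when the rank of $G_S$ is infinite along it; matching these dictionaries term by term gives $G_S \PHEquiv G_{S'} \iff S \isom S'$.

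For the reverse reduction I would run the construction backwards. Given a locally finite graph $G$ of the appropriate degree, thicken each vertex to a $d$-holed sphere and each edge to a gluing cylinder, producing a surface $\Psi(G)$ carrying a tautological pants decomposition whose dual graph is $G$. This map is Borel, and by the same dictionary $G \PHEquiv G' \iff \Psi(G) \isom \Psi(G')$, so $\PHEquiv \leq_B {\isom}$. Combining the two reductions yields $\isom \; \equiv_B \; \PHEquiv$, and \Cref{thm:mainIntro} identifies the latter with $C_\infty$.

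I expect the genuine work to concentrate in the completeness claim of the second paragraph: translating the surface invariants $(g, E, E_g)$ into graph-theoretic invariants and checking that proper homotopy equivalence of dual graphs neither forgets nor manufactures homeomorphism data. In particular, recovering a \emph{finite} total genus from the first Betti number of the finite core, rather than only detecting infinite genus through infinite rank at an end, is where I would be most careful, along with the bookkeeping that sends punctures and other planar ends of $S$ to ends of $G_S$ approached by rays. Once this dictionary is pinned down, the reductions and their Borelness are routine.
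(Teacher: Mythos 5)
Your proposal is correct and follows essentially the same route as the paper: associate to each pants-decomposed surface its dual graph, observe that the Ker\'ekj\'art\'o--Richards invariants $(g, E, E_g)$ translate term by term into the graph invariants $(\rk, E, E_\ell)$, and invoke the graph result. The only simplification you miss is that the paper \emph{defines} the topology on $\cS_{\star}$ as the (quotient of the) initial topology induced by the surface-to-graph map $\Phi_{\star}$, so that $\cS_{\star}$ is literally homeomorphic to $\cG_{\star}$ and the Borelness of both reductions is automatic rather than something to verify.
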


In upcoming work, Bergfalk and Smythe establish another standard Borel space of surfaces and show that the homeomorphism relation is also bireducible with $C_\infty$, \cite{BS}.

The analogous corollary to \Cref{thm:GenericIntro} for surfaces says that the homeomorphism class of the Loch Ness monster surface is a comeager set in the space of noncompact surfaces with a pants decomposition. We interpret this result as saying the following: If you randomly glue pairs of pants together to build a noncompact surface, that surface will almost surely be homeomorphic to a Loch Ness monster surface. This holds true if you allow your building blocks to include annuli or pants with arbitrarily many legs. 

\subsection*{Outline}
In \Cref{sec:prelims}, we give some background information about the spaces we are working in and the tools we use.
In \Cref{sec:results_graphs} we first prove the aforementioned results (\Cref{thm:mainIntro} and \Cref{cor:mainIntro}) for 3-regular graphs.
We then extend the results to other spaces of graphs, including bounded-degree graphs and locally finite graphs.
Finally, we show how to extend these proofs to spaces of surfaces in \Cref{sec:results_surfaces}.

\section*{Acknowledgements} 
The authors would like to thank Denis Osin for suggesting this project along with a starting plan of attack.
Many thanks to Christian Rosendal, Forte Shinko, and Iian Smythe for helpful conversations.
Much appreciation to Lucy Kristoffersen for providing a new perspective. 
The first author was gratefully supported by NSF DMS--2303365 (MSPRF).

\section{Preliminaries}\label{sec:prelims}

\subsection{Graphs}\label{ssec:Graphs}
A \textbf{graph} $\Gamma$ is a triple consisting of two sets: the vertex set and the edge set, and an incidence relation between them associating to each edge two (not necessarily distinct) vertices. We call an edge which is incident to the same vertex twice a \emph{self-loop}. This definition of a graph also allows for \emph{multiple edges}, that is, edges which are incident to the same pair of vertices. Two distinct vertices are \emph{adjacent} if they are incident to the same edge. An \emph{infinite graph} is one where the vertex set or edge set is an infinite set. 

The \textbf{degree} of a vertex, denoted $\deg(v)$, is the number of edges that are incident to it, counting multiplicity, so that a self-loop contributes degree two. We say a graph is \textbf{locally finite} if each vertex has finite degree. A graph is \textbf{$k$-regular} if every vertex in the graph has degree $k$. Note that locally-finite, infinite graphs necessarily have infinite vertex sets.

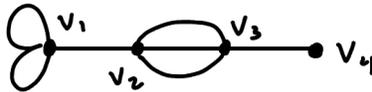
\begin{figure}[h]
    \centering
\begin{tikzpicture}
  % Define the style for vertices
  \tikzstyle{vertex}=[circle, draw, fill=black, inner sep=0pt, minimum size=4pt]

  % Draw vertices
  \node[vertex, label=above right:$v_1$] (v1) at (0,0) {};
  \node[vertex, label=below:$v_2$] (v2) at (2,0) {};
  \node[vertex, label=above:$v_3$] (v3) at (4,0) {};
  \node[vertex, label=right:$v_4$] (v4) at (6,0) {};

  % Draw edges
  \draw (v1) -- (v2);
  \draw (v3) -- (v4);

  % Draw parallel edges between V2 and V3
  \draw (v2) -- (v3);
  \draw[bend left=30] (v2) to (v3);
  \draw[bend right=30] (v2) to (v3);

  % Draw loops at V1
  \draw (v1) to[out=95,in=175,looseness=25] (v1);
  \draw (v1) to[out=185,in=265,looseness=25] (v1);

\end{tikzpicture}
\caption{An example of a graph where $v_1$ has two self loops, and there are multiple edges between $v_2$ and $v_3$. The degree of $v_1$ is $5$, the degrees of $v_2$ and $v_3$ are $4$, and the degree of $v_4$ is $1$.}
    \label{fig:GraphEx}
\end{figure}

\begin{comment}
\begin{figure}[h]
    \centering
    \includegraphics[width=6cm]{}
    \caption{An example of a graph where $v_1$ has two self loops, and there are multiple edges between $v_2$ and $v_3$. The degree of $v_1$ is $5$, the degrees of $v_2$ and $v_3$ are $4$, and the degree of $v_4$ is $1$.}
    \label{fig:GraphEx}
\end{figure} 
\end{comment}

We realize a graph as a topological space by building the CW-complex with $0$-cells corresponding to the vertex set of $\Gamma$ and $1$-cells corresponding to the edge set of $\Gamma$. The $1$-cells are then glued to the $0$-cells by the incidence relation. The graph has a natural metric, called the \emph{path metric}, which assigns each edge length one, so that adjacent vertices are distance one from each other. A graph is \emph{connected} if it is connected as a CW complex; for the rest of the paper we will assume all graphs are connected. Note that connected, locally finite, infinite graphs necessarily have infinite but countable vertex and edge sets.  

The fundamental group of a graph is always a free group, so we define the \textbf{rank} of a graph to be $\rk(\Gamma):=\rk(\pi_1(\Gamma))$. Recall that a continuous function between topological spaces, $f:X \arr Y$, is a \textbf{homotopy equivalence} if there is a continuous $g:Y \arr X$ such that $fg$ is homotopic to the identity map, $\id_Y$, and $gf\simeq \id_X$. A function between topological spaces is \textbf{proper} if the preimage of compact sets is compact. Thus, a \textbf{proper homotopy equivalence} is a continuous map $f:X \arr Y$ which is proper with proper homotopy inverse $g:Y \arr X$, so that both $f\circ g$ and $g\circ f$ are properly homotopic to identity maps. It is an exercise in algebraic topology to show that two graphs are homotopy equivalent exactly when they have the same rank. Because we will focus on infinite graphs, we will consider the equivalence relation of proper homotopy equivalence (PHE), and we will write $\Gamma \PHEquiv \Delta$ if $\Gamma$ and $\Delta$ are PHE.

\subsection{End spaces of graphs}\label{ssec:EndSpace}
The classification of locally finite, infinite graphs up to proper homotopy equivalence was done by Ayala, Dominguez, M\'{a}rquez, and Quintero in \cite{ayala1990proper}. In order to understand the classification, we need to define the end space of a graph. 

\begin{comment}
\begin{DEF}\label{def:Endspace}
    Let $\Gamma$ be a locally finite graph. The \textbf{end space} of $\G$ is the following inverse limit with the inverse limit topology,
  \[ E(\Gamma)\defeq \varprojlim_{K\subset \Gamma} \pi_{0}(\Gamma \setminus K), \] where $K$ are compact subsets of $\G$. 
\end{DEF}
The end space will always be Hausdorff, totally disconnected, and compact, so homeomorphic to a closed subset of the Cantor set.  Because graphs are Hausdorff, locally compact, and $\sigma$-compact, we can find a countable sequence of compact sets $K_i$ that obtain $E(\Gamma)$. 
\end{comment}

\begin{DEF}\label{def:Endspace}
    Let $\Gamma$ be a locally finite graph. The \textbf{end space} of $\G$ is the following inverse limit with the inverse limit topology,
  \[ E(\Gamma):= \varprojlim_{K\subset \Gamma} \pi_{0}(\Gamma \setminus K), \] where $\pi_0$ denotes the zeroth homotopy set and $K$ ranges over compact subsets of $\G$. 
\end{DEF}

Because we consider locally finite graphs, they are proper and $\sigma$-compact. That is, closed metric balls are compact, and a countable collection of compact sets cover the graphs. Specifically, for any $\Gamma \in \cG_{\star}$ the collection of closed integer radius metric balls about the basepoint, $\{\overline{B_{n}(\G)}\}_{n\in \N}$,
covers $\Gamma$ and can be used to obtain $E(\Gamma)$. Call these closed balls $B_n$. The transition map $\rho_n: \pi_0(\Gamma \setminus B_{n+1}) \twoheadrightarrow \pi_0(\Gamma \setminus B_n)$ is induced from the inclusion map $\Gamma \setminus B_{n+1} \hookrightarrow \Gamma \setminus B_n$. 

Recall that the inverse limit topology is the subspace topology in the product space $ \prod_{\N} \pi_{0}(\Gamma \setminus B_n)$. Note that there are finitely many complimentary components to each $B_n$, so each coordinate in the product space is discrete. Let $U$ be a complementary component of $B_n$, then $U$ defines a basis element for the topology on the end space by \[ U^*=\{e\in E(\G) \mid \rho_n(e)=U\}.\]  If one visualizes an end as a ``way to walk to infinity" in the graph $\G$, then the projection of $e$ to the $n$th coordinate is the complimentary component of $B_n$ which contains the tail of any path to $e$. 

One can check that the topology described above is Hausdorff, totally disconnected, and compact by Tychonoff's theorem. The basis given above is countable, because there are only finitely many complimentary components to each $B_n$, so $E(\Gamma)$ is second countable. Geometric topologists may be familiar with the refrain, \emph{``As such, the endspace is homeomorphic to a closed subset of the Cantor set."} In fact, we will later identify the endspace of a graph with a closed subset of \emph{Baire space} in a Borel way, as a crucial step to proving our main theorem.

Let $e\in E(\G)$, and define the $i$th neighborhood of $e$, denoted $N_i(e)$, to be the component of $\Gamma \setminus B_i$ which contains $e$. We say that the end $e$ is \textbf{accumulated by loops} if $\rk(N_i(e))>0$ for all $i$, or equivalently, if $\rk(N_i(e))=\infty$ for all $i$.
The set of ends of $\G$ accumulated by loops is a closed subspace of $E(\Gamma)$, denoted by $E_{\ell}(\G)$. Together $(E(\G), E_{\ell}(\G))$ are called the \textbf{endspace pair} of $\G$. Endspace pairs $(E(\Gamma),E_{\ell}(\G))$ and $(E(\Delta),E_{\ell}(\Delta))$ are isomorphic if there is a homeomorphism $\phi:E(\Gamma) \arr E(\Delta)$ with $\phi(E_{\ell}(\Gamma))=E_{\ell}(\Delta)$. We are now ready to state the proper homotopy equivalence classification theorem for graphs.

\begin{THM}[{\cite[Theorem 2.7]{ayala1990proper}}] \label{thm:PHEcriteria}
    Two locally finite graphs $\Gamma$ and $\Delta$ are proper homotopy equivalent exactly when $\rk(\Gamma)=\rk(\Delta)$ and $(E(\Gamma),E_{\ell}(\G)) \simeq (E(\Delta),E_{\ell}(\Delta))$. 
\end{THM}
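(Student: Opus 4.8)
The plan is to prove the two implications separately: that a proper homotopy equivalence forces the two invariants to agree (necessity), and that agreement of the invariants allows us to build a proper homotopy equivalence (sufficiency). The necessity direction is the routine one, so I would dispatch it first.

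For necessity, let $f\colon\Gamma\to\Delta$ be a proper homotopy equivalence with proper homotopy inverse $g$. Since $f$ is in particular a homotopy equivalence, $f_*\colon\pi_1(\Gamma)\to\pi_1(\Delta)$ is an isomorphism of free groups, so $\rk(\Gamma)=\rk(\Delta)$. For the endspace pair, I would first record that the assignment $\Gamma\mapsto E(\Gamma)$ is functorial for proper maps: a proper map carries the inverse system $\{\pi_0(\Gamma\setminus K)\}$ into $\{\pi_0(\Delta\setminus K')\}$ compatibly, inducing a continuous map $E(f)\colon E(\Gamma)\to E(\Delta)$, and properly homotopic maps induce the same map. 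Hence $E(f)$ and $E(g)$ are mutually inverse homeomorphisms. It then remains to show $E(f)(E_\ell(\Gamma))=E_\ell(\Delta)$; since membership in $E_\ell$ is the condition that every neighborhood $N_i(e)$ has infinite rank, I would prove a lemma that $f$ carries a neighborhood of $e$ into a neighborhood of $E(f)(e)$ by a map which, up to proper homotopy and after sandwiching with $g$, is a homotopy equivalence of the corresponding neighborhoods, so that infinitude of rank is preserved in both directions.

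For sufficiency I would introduce a standard model $M(E,E_\ell,r)$ for each admissible triple and show every graph is properly homotopy equivalent to the model determined by its invariants; two graphs with matching invariants then share a model and are therefore PHE to one another. To build the model, I would realize the Stone space $E$ as the end space of a locally finite tree $T$, and then attach loops: if $r$ is finite (equivalently $E_\ell=\emptyset$) I attach $r$ loops inside a single compact region, and if $r=\infty$ I attach loops along rays converging to the ends of $E_\ell$, arranged so that every neighborhood of an $E_\ell$-end contains infinitely many loops while neighborhoods of ends outside the closed set $E_\ell$ contain only finitely many. To reduce a given $\Gamma$ to its model, I would fix a proper exhaustion $K_1\subset K_2\subset\cdots$ by finite connected subgraphs with no compact complementary components, choose a spanning tree, and then perform two families of elementary proper homotopy equivalences: collapsing maximal finite subtrees in each region $K_{i+1}\setminus K_i$ to straighten the tree part onto $T$, and sliding the attaching data of the non-tree edges so that the loops accumulate exactly along $E_\ell$.

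The hard part will be the sufficiency direction, and within it the control of properness while performing infinitely many moves at once. Each individual collapse or loop-slide is a proper homotopy equivalence, but the reduction requires composing infinitely many of them, and the resulting infinite composition is only a proper map if the supports of the homotopies are locally finite and escape every compact set. I would therefore organize the moves region-by-region along the exhaustion, arranging that the support of the homotopy applied in passing from $K_i$ to $K_{i+1}$ lies in $\Gamma\setminus K_{i-1}$, so that each compact set meets only finitely many supports and the infinite concatenation converges to a genuine proper homotopy equivalence. Verifying that this bookkeeping simultaneously achieves the correct end space and the correct accumulation set $E_\ell$ -- rather than inadvertently merging ends or accumulating loops at the wrong ends -- is the crux of the argument.
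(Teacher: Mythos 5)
First, a point of comparison: the paper does not prove this theorem at all --- it is imported verbatim from Ayala--Dom\'inguez--M\'arquez--Quintero \cite{ayala1990proper}, so there is no internal argument to measure yours against. Your outline does follow the same broad strategy as that source (and as the ``standard models'' of \cite{AB2021} that the paper itself invokes in Section 2.2): prove invariance of $\rk$ and of the pair $(E,E_\ell)$ under proper homotopy equivalence, then reduce every graph to a canonical model built from a tree realizing $E$ with loops accumulating exactly along $E_\ell$, controlling properness by making the supports of the infinitely many elementary moves locally finite. Your identification of the crux of the sufficiency direction is accurate, though it remains an outline.

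There is, however, a genuine gap in the step you label as routine. To show $E(f)(E_\ell(\Gamma))\subseteq E_\ell(\Delta)$ you sandwich neighborhoods $M'\subseteq M$ of $e$ with a neighborhood $N$ of $E(f)(e)$ so that the inclusion $M'\hookrightarrow M$ is homotopic in $M$ to a map factoring through $N$, and you assert that the sandwiched maps are ``homotopy equivalences of the corresponding neighborhoods,'' so that infinite rank is preserved. They are not homotopy equivalences of the neighborhoods; all the sandwich gives is that the injective map $\pi_1(M')\to\pi_1(M)$ factors through $\pi_1(N)$, hence that $\pi_1(N)$ contains a copy of $\pi_1(M')$. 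Since a free group of infinite rank embeds into $F_2$ (for example as the commutator subgroup), this does \emph{not} force $\rk(N)=\infty$, and the argument as stated proves nothing about $E_\ell$. The repair uses structure special to graphs: a connected subgraph inclusion induces an inclusion of a \emph{free factor}, from which one shows that an end is accumulated by loops if and only if every neighborhood of it has rank at least one (otherwise the stable finite rank would force a fixed compact immersed core loop to lie in every neighborhood of the end, which is impossible). With that reformulation the sandwich does close: if $N$ were a tree, then $\pi_1(M')\to\pi_1(M)$ would factor through the trivial group, and injectivity would force $\rk(M')=0$, contradicting $e\in E_\ell(\Gamma)$. Some such additional input is needed before the necessity direction can be called routine.
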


\Cref{fig:endsExTikz} shows three graphs which are not proper homotopy equivalent to each other. The first graph is rank $3$ and has end space homeomorphic to $(\{\ast\},\emptyset)$, a singleton and the empty set. The second has infinite rank and end space homeomorphic to $(\{\frac{1}{n}\}\cup \{0\},\{0\})$, with the subspace topology from $\R$. The third graph has a Cantor set of ends, with a clopen subset (so itself homeomorphic to a Cantor set) of ends accumulated by loops.

\begin{figure}[h]
    \centering
\begin{tikzpicture}

  % Define the style for vertices
  \tikzstyle{vertex}=[circle, draw, fill=black, inner sep=0pt, minimum size=4pt]

  % Left graph: Vertical path with loops
  \begin{scope}[shift={(-5,0)}]
    % Draw vertices and edges
    \foreach \i in {0,...,5} {
      \node[vertex] (v\i) at (0,-\i) {};
    }
    \draw (v0) -- (v5);
    % Draw loops at the top
    \draw (v0) to[out=50,in=130,looseness=20] (v0);
    \draw (v0) to[out=140,in=220,looseness=20] (v0);
    \draw (v0) to[out=40,in=-40,looseness=20] (v0);
    % Draw arrow at the bottom
    \draw[->] (v5) -- ++(0,-0.5);
  \end{scope}

  % Middle graph: [AI name deleted haha]
  \begin{scope}[shift={(0,0)}]
    % Draw vertices and edges
    \foreach \i in {0,...,5} {
      \node[vertex] (w\i) at (0,-\i) {};
      \node[vertex] (a\i) at (-1,-\i) {};
      \node[vertex] (b\i) at (-2,-\i) {};
      \draw (b\i) -- (w\i);
      \draw[->] (b\i) -- ++(-0.5,0);
    }
    \draw (w0) -- (w5);
    % Draw loops on the right
    \foreach \i in {0,...,5} {
      \draw (w\i) to[out=45,in=-45,looseness=20] (w\i);
    }
    % Draw arrow at the bottom
    \draw[->] (w5) -- ++(0,-0.5);
  \end{scope}

  % Right graph: Binary tree
 \begin{scope}[shift={(5,0)}]
    % Draw vertices and edges for the first three levels
    \node[vertex] (r0) at (0,0) {};
    \node[vertex] (r1) at (-1,-1) {};
    \node[vertex] (r2) at (1,-1) {};
    \draw (r0) -- (r1) -- ++(-0.5,-1) node[vertex] (r3) {};
    \draw (r1) -- ++(0.5,-1) node[vertex] (r4) {};
    \draw (r0) -- (r2) -- ++(-0.5,-1) node[vertex] (r5) {};
    \draw (r2) -- ++(0.5,-1) node[vertex] (r6) {};

    % Add additional level
    \draw (r3) -- ++(-0.25,-1) node[vertex] (r30) {};
    \draw (r3) -- ++(0.25,-1) node[vertex] (r31) {};
    \draw (r4) -- ++(-0.25,-1) node[vertex] (r40) {};
    \draw (r4) -- ++(0.25,-1) node[vertex] (r41) {};
    \draw (r5) -- ++(-0.25,-1) node[vertex] {};
    \draw (r5) -- ++(0.25,-1) node[vertex] {};
    \draw (r6) -- ++(-0.25,-1) node[vertex] {};
    \draw (r6) -- ++(0.25,-1) node[vertex] {};

    %Add Loops
    \draw (r0) to[out=45,in=135,looseness=20] (r0);
    \draw (r1) to[out=180,in=90,looseness=20] (r1);
    \draw (r3) to[out=180,in=90,looseness=18] (r3);
    \draw (r4) to[out=0,in=90,looseness=18] (r4);
    \draw (r41) to[out=0,in=90,looseness=16] (r41);
    \draw (r31) to[out=0,in=90,looseness=12] (r31);
    \draw (r30) to[out=180,in=90,looseness=16] (r30);
    \draw (r40) to[out=180,in=90,looseness=12] (r40);

    % Draw dots for continuation
    \node at (-1.75,-3.5) {\vdots};
    \node at (-1,-3.5) {\vdots};
    \node at (-0.25,-3.5) {\vdots};
    \node at (0.25,-3.5) {\vdots};
    \node at (1,-3.5) {\vdots};
    \node at (1.75,-3.5) {\vdots};
  \end{scope}

\end{tikzpicture}
    \caption{Examples of graphs with different PHE types.}
    \label{fig:endsExTikz}
\end{figure}

\subsection{Spaces of graphs}\label{ssec:space_of_graphs}
We realize locally finite graphs as standard Borel spaces in two different ways. Then we show that $\PHEquiv$ on these spaces are Borel bireducible.

\subsubsection{Based Graphs}
We first define the set $\mathcal{G}_k$ to be the set of infinite, $k$-regular graphs with a basepoint, considered up to graph isomorphism fixing the basepoint. We topologize this set by comparing $r$-balls of the basepoint, denoted $B_r(\G)$.
We use $\isom$ to denote isomorphism fixing the base point. 
The following defines a metric on $\mathcal{G}_k$:
\[d(\Gamma_1, \Gamma_2) = \inf \left\{ \frac{1}{2^r} \mid \; B_r(\Gamma_1) \isom B_r(\Gamma_2) \right\}. \] Two graphs will be distance zero exactly when there is an isomorphism between them fixing the basepoint, making the topology Hausdorff. 
The $\epsilon$-balls of the basepoints of any two graphs in $\mathcal{G}_k$ will be isomorphic for all $\e<\frac12$. Thus, the distance function is always finite valued and $\mathcal{G}_k$ is finite diameter.
The sets \[\mathcal{U}_{(\Gamma,r)}=\{\Delta \in \cG_k \mid B_r(\G) \isom B_r(\Delta)\}, \]  form a clopen basis for the topology. The basis is clopen because the sets $\cU_{(\G,r)}$ and $\cU_{(\G,r+\e)}$ will agree when $[r,r+\e]$ does not contain an integer or half-integer. Note that it is again very important that our graphs be connected, otherwise this topology will not be Hausdorff. 

The space $\mathcal{G}_1$ is empty, and $\mathcal{G}_2$ is a singleton, corresponding to the graph which is homeomorphic to a line. The space $\cG_3$ is our first infinite, non-discrete, Polish space, as are $\cG_k$ for all $k\geq 4$. However, none of these spaces contain all PHE-types of infinite graphs. For example, there is only one $k$-regular tree for each $k$, and for $k\geq 3$ these all have end space homeomorphic to the Cantor set. So, we also consider the spaces $\cG_{\leq k}$, which consists of infinite graphs where each vertex has degree less than or equal to $k$. We topologize the space in the exact same way as $\cG_k$ above, allowing $r=0$, so that graphs with different degree basepoints are distance $1$ from each other. %\Hannah{This doesn't mess with our basis being clopen at all does it? Becuase any set $\cU_{(\G,0)}$ is the entire space?} 
Each space $\cG_{\leq k}$ for $k\geq 3$ contains every proper homotopy equivalence type of graph. To see this, start with the construction of standard models in Definition 2.5 of \cite{AB2021} and possibly apply proper homotopy equivalences as in \cref{fig:PHE_Valence} to decrease the degree of any vertex. The final space we consider is $\cG_{<\infty}$, which is the space of all locally finite, infinite graphs, again topologized in the same way. We will use the notation $\cG_{\star}$ to denote any of the previously described spaces of graphs. 

The degree of a vertex is not invariant under proper homotopy equivalence, so while we study the equivalence relation of PHE, note that the homotopies do not live in $\cG_{\star}\times [0,1]$.

\begin{figure}[h]
    \centering
    \begin{tikzpicture}
\tikzstyle{vertex}=[circle, draw, Turquoise, fill=Turquoise, inner sep=2pt, minimum size=4pt]
    % Left structure
    \begin{scope}
        \node[vertex](center1) at (0,0) {};
        \draw[thick] (center1) to[out=45, in=135, looseness=30] (center1);
        \draw[thick] (center1) to[out=210-45, in=210+45, looseness=30] (center1);
        \draw[thick] (center1) to[out=295, in=25, looseness=30] (center1);
    \end{scope}
    
    % Tilde
    \node at (2,0) {$\sim$};
    
    % Right structure
    \begin{scope}[xshift=4cm]
        \node[vertex] (center2) at (0,0) {};
        \node[vertex] (top) at (0,.75) {};
        \node[vertex] (left) at (-.75,-.5) {};
        \node[vertex] (right) at (.75,-.5) {};
        
        \draw[thick, Turquoise] (center2) -- (top);
        \draw[thick, Turquoise] (center2) -- (left);
        \draw[thick, Turquoise] (center2) -- (right);
        
        \draw[thick] (top) to[out=45, in=135, looseness=30] (top);
        \draw[thick] (left) to[out=210-45, in=210+45, looseness=30] (left);
        \draw[thick] (right) to[out=295, in=25, looseness=30] (right);
    \end{scope}
\end{tikzpicture}
    \caption{Two graphs which are proper homotopy equivalent. The left graph is $6$-regular, while right graph is $3$-regular.}
    \label{fig:PHE_Valence}
\end{figure}

\subsubsection{Marked Graphs}
We now define a space of marked graphs. We first say that $\G$ is \emph{a graph on $\N$} if the vertex set is labeled by $\N$. A \textit{directed} graph on $\N$ is uniquely represented by an element $\Gamma \in \N^{\N^2}$, where $\Gamma_{(n,m)} \in \N$ is the number of directed edges from $n$ to $m$. 
If $\G_{(n,m)}=\G_{(m,n)}$ for all $(n,m)\in \N^2$, then we consider $\G$ as an undirected graph. 
Let $\G[\{n_1,\dots,n_k\}]$ denote the induced subgraph of $\G$ on the vertex set $\{n_1,\dots,n_k\}$, and $\Gamma[N]:=\G[\{0,1,\dots,N\}]$. 

The product topology on $\N^{\N^2}$ is Polish, and by re-indexing is homeomorphic to Baire space.
Because basic open sets in $\N^{\N^2}$ are sets where finitely many of the coordinates are specified, a neighborhood basis about the point $\G$ is given by sets of graphs with equal induced subgraphs on finite sets of vertices. Following the notation above, we name basic open sets about $\G$ by  \begin{align*}
\cV_{\{n_1,\dots, n_k\}}(\G)&=\{\Delta \mid \Delta[\{n_1,\dots,n_k\}]=\G[\{n_1,\dots,n_k\}] \}, \text{ and }\\
\cV_{[N]}(\G)&=\{\Delta \mid \Delta[N]=\G[N] \}.
\end{align*}

We define the set $\mathcal{MG}_k \subseteq \N^{\N^2}$ to be the set of undirected, $k$-regular graphs on $\N$ with $\G[N]$ connected for each $N\in \N$. We call the condition that $\G[N]$ is connected for all $N\in \N$, \textbf{strongly connected}.  If one thinks about building a graph by attaching vertices in numerical order, then this condition guarantees that at every step the graph is connected. 

Similarly, we define $\mathcal{MG}_{\leq k}$ to be the undirected, strongly connected, graphs with vertex degree bounded by $k$, and $\MG_{<\infty}$ to be the undirected, strongly connected, locally finite graphs. We again use the notation that $\MG_{\star}$ denotes any of the aforementioned spaces.

\begin{PROP}
    Each space $\mathcal{MG}_{k}$ and $\mathcal{MG}_{\leq k}$ is a $G_\delta$ subset of $\N^{\N^2}$, and hence is itself a Polish space, with the induced subspace topology.
\end{PROP}

\begin{proof} We check that each restriction on the type of graph is a $G_{\delta}$ condition within $\N^{\N^2}$, so that each of the named spaces inherits a Polish topology.
    \begin{enumerate}
        \item For fixed $n$ and $m$, the set $\{\Gamma \in  \N^{\N^2} : \Gamma_{(n,m)} = \Gamma_{(m,n)}\}$ is open because it is determined by the induced subgraph on $\{n,m\}$. 
        Undirected graphs are the intersection of these sets over all $(n,m)\in \N^2$, so form a $G_{\delta}$ set.
        \item Similarly, for each $N \in \N$, $\{\Gamma \in  \N^{\N^2} : \Gamma[N] \text{ is connected}\}$ is open, and strong connectedness is a $G_\delta$ condition.
        \item The set of graphs where vertex $n$ has degree at least $k$ can be written as \[\{\Gamma \in  \N^{\N^2} : \sum_{m \in \N} \Gamma_{n,m} > k \},\] and is open because it is achieved on a finite induced subgraph.
        So, the complimentary set, where vertex $n$ has degree bounded by $k$, is closed.
        Hence, the set of graphs where vertex $n$ has degree $k$ is $G_\delta$, as it is the intersection of an open set and a closed set.
        Graphs with uniform degree can be written \[ \bigcap_{n \in \N} \left\{\Gamma \in  \N^{\N^2} : \sum_{m \in \N} \Gamma_{n,m} = k \right\},\] which is $G_{\delta}$. On the other hand, the set of graphs with bounded degree is an intersection over closed sets, so closed.
        \qedhere
    \end{enumerate}
\end{proof}

\begin{OBS}
    The space $\mathcal{MG}_{<\infty}$ is an $F_{\sigma\delta}$ subset of $\N^{\N^2}$, and hence is itself a standard Borel space with the induced subspace topology \cite[Theorem 11.23]{Tserunyan}.
\end{OBS}

\begin{proof}
    The set of locally finite graphs can be written as \[\bigcap_{n \in \N} \bigcup_{k \in \N} \{\Gamma \in  \N^{\N^2} : \sum_{m \in \N} \Gamma_{n,m} \le k \}.
        \qedhere
    \]
\end{proof}

There is a surjective forgetful map from $F_{\star}:\mathcal{MG}_{\star} \arr \cG_{\star}$ that labels $0$ as the basepoint and forgets all other vertex labels.  

\begin{PROP}\label{forgetful_map}
    The forgetful map $F_{\star}:\mathcal{MG}_{\star} \arr \cG_{\star}$ is continuous and open.
\end{PROP}

\begin{proof}
    First observe that the preimage of a based graph is the set of all isomorphic based graphs with a legal labeling, meaning a labeling so that the graph is strongly connected. 

    Let $U=\mathcal{U}_{\Delta,r}$ be a basic open set in $\cG_{\star}$ and let $\Gamma$  be a point in $F_{\star}\inv(U)$. Then $\Gamma$ has a legal labeling of $F_{\star}(\Gamma)\in U$. Let $I\subset \N$ be the finite subset of integers that label the points in $B_{r+1}(F_{\star}(\Gamma))$. Then the open set $\cV_I(\G)$ maps into $U$ by the forgetful map.

    To see that $F_\star$ is open, fix a basic open set $\cV_I(\Gamma)$.
    Let $\Delta \in F_\star(\cV_I(\G))$. Then for some $r>0$, the ball $B_r(\Delta)$ contains all the vertices coming from $I$. 
    Hence, $\mathcal{U}_{\Delta,r+1} \subseteq F_\star(\cV_I(\Gamma))$.
    \qedhere
\end{proof}

We also show that $F_\star$ has a continuous right-inverse.

\begin{comment}
    
\begin{PROP}
    Let $R_{\star}$ be any map $\cG_{\star} \to \MG_{\star}$ that labels the vertices of $\G$ to produce a graph $M\G$ so that: \begin{itemize}
        \item the basepoint is labeled by $0$,
        \item vertex labels in $B_{r+1}(0) \setminus B_r(0)$ are all strictly greater than the vertex labels in $B_r(0)$, and
        \item every $n\in \N$ is assigned.
    \end{itemize}
    Then $R_{\star}: \cG_{\star} \to \MG_{\star}$ is continuous and open.
\end{PROP}

\begin{proof}
    For any $\Delta \in \MG_{\star}$ and $N \in \N$, the preimage $R_{\star}^{-1}(\cV_{[N]}(\Delta))$ is a collection of graphs that agree with a subset of $G_{k,N}$ (as in \cref{inverse_limit_cG*}).
    This set is open because each space $G_{k,N}$
    has the discrete topology.
    Because $\cG_{\star}$ is the inverse limit of the $G_{k,N}$, the map $R_\star$ is continuous. 

    To see that $R_\star$ is open, let $\cU_{\G,r}$ be a basic open set in $\cG_\star$, and let $\Delta \in R_\star(\cU_{\G,r})$.
    Let $I$ be the set of labels of the vertices in $B_{r+1}(\G)$, so that $\cV_{I}(\Delta) \subseteq R_\star(\cU_{\G,r})$. 
\end{proof}
\end{comment}

\begin{PROP}\label{rememberful_map}
    There is a continuous map $R_{\star}:\cG_{\star} \to \MG_{\star}$ so that $F_{\star}\circ R_{\star}=\id_{\cG_{\star}}$.
\end{PROP}

\begin{proof}

    First we realize the spaces $\cG_{k}$ as inverse limits. 
    For a based graph $(G,p)$ we can define its \textbf{radius} as \[\operatorname{rad}(G,p)=\sup_{x\in G} d(p,x)\] where $d$ is the path metric. 
    When $G$ is finite, the supremum is realized and is an integer or half-integer. 
    Call a vertex $v$ of $G$ an \emph{interior vertex} if $d(v,p)<\rad(G,p)$, and a \emph{border vertex} otherwise.
    We define the space $G_{k,i}$ to be the set of based graphs of radius $i$ with interior vertices of degree $k$, border vertices with degree $\leq k$, and at least one border vertex of degree $<k$.
    Endow $G_{k,i}$ with the discrete topology.
    The maps $f_{ij}:G_{k,j}\arr G_{k,i}$ for $i\leq j$ defined by $(G,p) \mapsto (B_i(G),p)$ form an inverse limit system with the spaces $G_{k,i}$. This inverse limit is exactly $\cG_k$. 

    Now we can define the map $R_{k}$ inductively on the spaces $G_{k,i}$.  Starting with $i=0$, the space $G_{k,0}$ is a singleton consisting of the graph which is a single vertex. This vertex becomes the basepoint of any graph $\G\in \cG_{k}$, so label it by $n=0$. 
    Next the space $G_{k,1}$ has finitely many graphs in it; for each one assign a labeling of the vertices with the basepoint labeled $0$, so that the labeling extends $G_{k,0}$ and uses successive integers.
    For the inductive step label each graph $G\in G_{k,i}$ so that it extends the labels assigned to the graph $f_{i(i-1)}(G)=B_{i-1}(G)$ with successive integers. 
    Because each space $G_{k,i}$ is discrete, the resulting labeling is continuous.

    Observe that by changing the degree restriction on interior vertices, each space $\cG_{\star}$ is an analogous inverse limit of discrete spaces. 
    So, we can still assign labels respecting the inverse limit as above and the resulting map will be continuous.   
\end{proof}

\subsection{Complexity of Borel equivalence relations}\label{ssec:BorelEquiv}

Our main result, \Cref{thm:mainIntro}, discusses the complexity of the proper homotopy equivalence relation on the spaces $\cG_{\star}$.
To formalize this, we need to introduce the notion of Borel reducibility of equivalence relations.
For an in depth introduction to this area, we refer the reader to \cite[Part 4]{Tserunyan}.

First, we view an equivalence relation $\sim$ on a standard Borel space $X$ as a subset $E_{\sim}$ of the product space $X^2$ as follows: \[E_{\sim}=\{(x,y)\in X^2 \mid x\sim y\}.\] 
This allows us to describe equivalence relations with set descriptors, such as Borel or analytic. Recall that a set is \emph{analytic} if it is the continuous image of a Borel set.
A consequence of \Cref{thm:mainIntro} is that the proper homotopy equivalence relation is analytic, but not Borel. This already gives a coarse notion of complexity for equivalence relations, and the notion of Borel reducibility refines it.

For equivalence relations $E$ and $F$ on standard Borel spaces $X$ and $Y$, we say that $E$ is \textbf{Borel reducible} to $F$, and we write $E \leq_B F$, if there is a Borel function $\phi: X \to Y$ so that $(x,y) \in E$ if and only if $(\phi(x),\phi(y)) \in F$.
In other words, to check whether two points are $E$-equivalent, we can pass the points to $Y$ via a Borel function and check whether the images are $F$-equivalent.
In this case, we say that $\phi$ is a \textit{Borel reduction} from $E$ to $F$.
We say that $E$ is \textbf{Borel bireducible} with $F$ if $E \leq_B F$ and $F \leq_B E$. The reducing function is only required to be Borel, because any two uncountable Polish spaces are Borel isomorphic, so the existence of a reduction does not depend on the specific topologies chosen. 

For example, \Cref{forgetful_map} and \Cref{rememberful_map} show the following.

\begin{PROP}\label{graph_spaces_are_bireducible}
        $\PHEquiv$ on $\cG_{\star}$ is Borel bireducible with $\PHEquiv$ on $\MG_{\star}$.
\end{PROP}

\begin{proof}
    By construction, the Borel maps $F_{\star}$ and $R_{\star}$ simply label or unlabel vertices. 
    Hence, $F_\star$ and $R_\star$ preserve graph isomorphism, and in particular proper homotopy equivalence.
\end{proof}

%\begin{RMK}
 %   In light of the above, we remark that we can prove our results for whichever space ($\cG_\star$ or $\MG_\star$) is more convenient.
%\end{RMK}

An equivalence relation $E$ is called \textit{concretely classifiable} (or \textit{smooth}) if there is a Borel reduction from $E$ to the equality relation on some Polish space $Y$. For example, similarity of $n \times n$ matrices is smooth since similarity is determined by computing the Jordan canonical form, hence giving a reduction to equality on $\R^{n^2}$. However, many notable equivalence relations are not smooth, including all of the equivalence relations mentioned in \Cref{notable_equivalence_rels}. 
We say that $E$ is \textit{classifiable by countable structures} if for some countable language  $\mathcal{L}$ (in first-order logic), there is a Borel reduction from $E$ to the isomorphism relation of the space of $\mathcal{L}$-structures with universe $\N$.

By \cite{BK96}, there is an orbit equivalence relation, denoted $C_\infty$, induced by a Borel action of $S_\infty$ (the Polish group of all permutations of $\N$) that is ``complete" in the sense that any orbit equivalence relation induced by a Borel action of a closed subgroup of $S_\infty$ is Borel reducible to $C_\infty$.
In the literature, analytic equivalence relations which are bireducible with $C_\infty$ are often referred to as \textbf{Borel complete}. 
An important class of analytic equivalence relations that are reducible to $C_\infty$ are those that are classifiable by countable structures: if $\mathcal{L}$ is a countable language, then the isomorphism relation of the space of $\mathcal{L}$-structures can be viewed as an $S_\infty $ orbit equivalence relation (see, e.g. \cite[16.C]{Kechris:classical_DST}).  

We will see in \Cref{thm:main} that the proper homotopy equivalence relation on each of the spaces $\mathcal{G}_{\star}$ and $\mathcal{MG}_{\star}$ is Borel complete.
Our result heavily relies on the following result of Camerlo and Gao.

\begin{THM}[\cite{CG01}]\label{thm:CarmeloGao}
    The homeomorphism relation on closed subsets of the Cantor space is Borel bireducible with $C_{\infty}$.
\end{THM}

\section{Results for graphs}\label{sec:results_graphs}
For clarity of exposition, we first show our main results for the spaces of $3$-regular graphs. Then in \Cref{ssec:related_results_graphs} we give modifications of these proofs that work in the $k$-regular, bounded degree, and locally finite settings.

\subsection{Genericity the Loch Ness monster PHE class}

We show that in both $\cG_3$ and $\MG_3$, the generic graph is proper homotopy equivalent to the Loch Ness monster graph.

\begin{PROP}\label{prop:3generic}
    The equivalence class of one-ended graphs with infinite rank is comeager in $\cG_3$.
\end{PROP}

\begin{proof}
First, we show that the set of graphs in $\cG_3$ with infinite rank is a dense $G_{\delta}$ set. 
Let $U_n$ be the graphs in $\cG_3$ with rank at least $n$. Each set $U_n$ is open because for any $\Gamma \in U_n$, there is an $r \in \N$ such that $B_r(\Gamma)$ already has rank $n$. Then $\mathcal{U}_{(\Gamma,r)} \subseteq U_n$.
To see that $U_n$ is dense, we show that for any $\Gamma \in \cG_3$ and $r \in \N$, we can find a graph $\Delta_U$ with rank at least $n$ and $B_r(\Delta_U) \isom B_r(\G)$. 
Such a graph $\Delta_U$ can be constructed from $\G$ by subdividing $n$ edges of $\G$ outside of $B_r(\G)$ and attaching lollipop graphs to the new vertices. We finish the claim by observing that $\bigcap U_n$ is exactly the set of graphs of infinite rank.

Now we verify that the generic $\Gamma$ is one-ended. 
Let $V_n$ be the set of $\Gamma \in \cG_3$ such that each pair of vertices in the sphere of radius $n$ of $\Gamma$ is path-connected outside of $B_n(\Gamma)$. 
If a graph $\Gamma$ is one ended, then eventually $\G \setminus B_r(\G)$ has one connected component, so the set of one-ended graphs is $\bigcup_{i\in\N}\bigcap_{n\geq i}V_n$. 
Each $V_n$ is open because for $\G \in V_n$ there is a finite path between any two vertices on the $n$-sphere. All these paths live inside some $B_r(\G)$, for $r\geq n$, and so $\cU_{(\G,r)}\subset V_n$. 
To see that $V_n$ is dense, we construct a graph $\Delta_V$ from any $\Gamma\in \cG_3$ so that $\Delta_V \in V_n$ with $B_r(\Delta_V) \isom B_r(\Gamma)$. For each pair of connected components of $\G \setminus B_r(\G)$ subdivide an edge in each component and add an edge between the new vertices, see \cref{fig:1EndDense}. 
Because there are finitely many vertices on the $n$-sphere of $\G$, there are finitely many connected components, and the modifications can be done successively in any order to construct the desired graph $\Delta_V$. 

Hence, for each $i \in \N$, $\bigcap_{n \ge i} V_n$ is dense $G_\delta$, so the set of one-ended graphs is a countable union of dense $G_\delta$ sets.
Thus, the generic $\Gamma \in \cG_3$ is one-ended. 
 
  \begin{figure}[h]
    \centering
    \includegraphics[width=8cm]{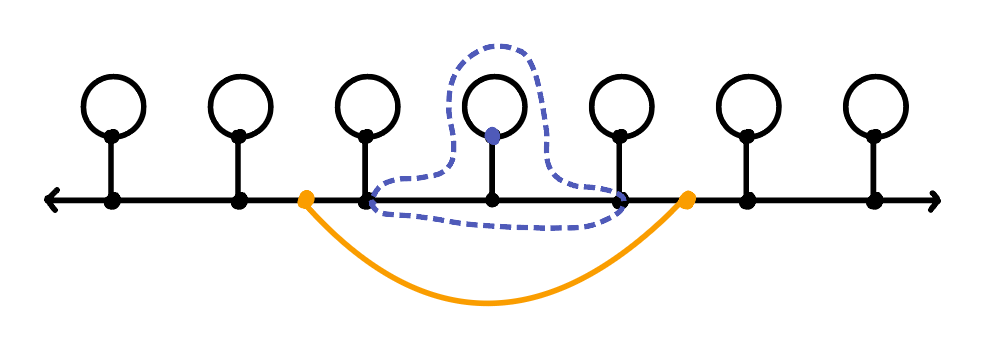}
    \caption{For the black graph $\G$, the ball of radius $2$ is drawn in blue. To find a one-ended graph in $\cU_{(\G,2)}$ we add the orange edge.}
    \label{fig:1EndDense}
\end{figure}

Because having infinite rank and one end are each generic properties and determine a unique PHE class, we see that the equivalence class of the Loch Ness monster graph generic.   
\end{proof}

\begin{COR}\label{cor:3generic_for_markedgraphs}
    The equivalence class of one-ended graphs with infinite rank is comeager in $\mathcal{M}\cG_3$.
\end{COR}

\begin{proof}
    By \Cref{prop:3generic}, the Loch Ness monster equivalence class in $\cG_3$ is comeager and contains a dense $G_\delta$ set $Z$.
    Since the forgetful map $F_3 : \MG_3 \to \cG_3$ is continuous and open (\Cref{forgetful_map}), $F_3^{-1}(Z)$ is dense $G_\delta$ in $\MG_3$.
    Since $F_3$ preserves PHE, $F_3^{-1}(Z)$ is contained in the equivalence of the Loch Ness monster graph in $\MG_3$.
\end{proof}

\subsection{Borel complexity of PHE}
We begin this subsection by reducing the homeomorphism relation on closed subsets of the Cantor set to the PHE relation on $\cG_3$.
We achieve this by (canonically) realizing every closed subset of the Cantor set as the endspace of a graph in $\cG_3$.

Below, $\mathcal{K}(2^{\mathbb{N}})$ denotes the hyperspace of compact subsets of $2^{\mathbb{N}}$ with the Vietoris topology, for more details see, e.g., \cite[Chapter 3.D]{Tserunyan}.

\begin{PROP}\label{lem:realizing_closed_subsets}
The homeomorphism relation on closed subsets of the Cantor set Borel reduces to the proper homotopy equivalence relation on $\cG_3$. That is, \[(\mathcal{K}(2^\N) , \Homeo) \le_B (\cG_3 ,\PHEquiv)\]
\end{PROP}

\begin{proof}
    We show that for any closed subset $C$ of the Cantor space, $C$ is canonically realized as the end space accumulated by loops of some $\Gamma_C \in \cG_3$. Moreover, $\Gamma_C$ can be chosen to have all of its ends accumulated by loops.
    Hence, the PHE-type of $\Gamma_C$ is exactly determined by $C$ up to homeomorphism, so $C$ and $C'$ are homeomorphic subsets of $2^\N$ if and only if $\Gamma_C$ and $\Gamma_{C'}$ are proper homotopy equivalent.

     More formally, we define a map $\mathcal{K}(2^\N) \mapsto \cG_3$, $C \mapsto \Gamma_C$ as follows.
     Let $C \subseteq{2^\N}$ be closed. Let $\G_C^*$ be the subgraph of the rooted binary tree with endspace $C$ and no leaves; that is, the graphical realization of the pruned tree corresponding to $C$. 
     Note that every vertex in $\Gamma_C^*$ has valence $2$ or $3$, aside from the root vertex which has valence $1$ or $2$. 
     Let $\Gamma_C$ be the graph obtained by subdividing each edge of $\G_C^*$,
     adding a loop to the root vertex if it was of degree $1$,
     and then adding ``lollipop graphs'' to each vertex of degree two, as illustrated in \cref{fig:lollipop}. Call the original root vertex the basepoint.
     These modifications do not add or collapse any ends, so the resulting graph $\Gamma_C \in \cG_3$ has $E({\Gamma_C}) = E_{\ell}({\Gamma_C}) \simeq C$, and is of infinite rank.

\begin{figure}[h]
    \centering
\begin{tikzpicture}
% Define the style for vertices
  \tikzstyle{vertex}=[circle, fill=black, inner sep=2pt, minimum size=4pt]
  \tikzstyle{bvertex}=[circle, fill=Blue, inner sep=2pt, minimum size=4pt]
  \tikzstyle{gvertex}=[circle, fill=Peach, inner sep=2pt, minimum size=4pt]
  \tikzstyle{lvertex}=[circle, fill=Turquoise, inner sep=2pt, minimum size=4pt]
 
 \begin{scope}[xshift=-5cm]
    % Draw vertices and edges for the first three levels
    \node[vertex] (r) at (0,0) {};
    \node[bvertex] (r0) at (-1,-1) {};
    \node[vertex] (r1) at (1,-1) {};
    \node[bvertex] (r00) at (-1.5,-2) {};
    \node[vertex] (r01) at (-.5,-2) {};
    \node[vertex] (r000) at (-1.75,-3) {};
    \node[vertex] (r001) at (-1.25,-3) {};
    \node[vertex] (r011) at (-.25,-3) {};
    \node[vertex] (r11) at (1.5,-2) {};
    \node[vertex] (r111) at (1.75,-3) {};

    %Draw edges
    \draw (r) -- (r1) -- (r11) -- (r111);
    \draw (r) -- (r0) -- (r00) -- (r000);
    \draw (r0) -- (r01) -- (r011);
    \draw (r00) -- (r001);

    % Draw dots for continuation
    \node at (-1.75,-3.5) {\vdots}; % Below r000
    \node at (-1.25,-3.5) {\vdots}; % Below r001
    \node at (-0.25,-3.5) {\vdots}; % Below r011
    \node at (1.75,-3.5) {\vdots}; % Below r111
  \end{scope}

  \begin{scope}
   % Draw vertices and edges for the first three levels
    \node[vertex] (r) at (0,0) {};
    \node[bvertex] (r0) at (-1,-1) {};
    \node[vertex] (r1) at (1,-1) {};
    \node[bvertex] (r00) at (-1.5,-2) {};
    \node[vertex] (r01) at (-.5,-2) {};
    \node[vertex] (r000) at (-1.75,-3) {};
    \node[vertex] (r001) at (-1.25,-3) {};
    \node[vertex] (r011) at (-.25,-3) {};
    \node[vertex] (r11) at (1.5,-2) {};
    \node[vertex] (r111) at (1.75,-3) {};
    
      % Draw edges with green vertices for subdivision
    \draw (r) -- node[gvertex] (g1) {} (r1) -- node[gvertex] (g2) {} (r11) -- node[gvertex] (g3) {} (r111);
    \draw (r) -- node[gvertex] (g4) {} (r0) -- node[gvertex] (g5) {} (r00) -- node[gvertex] (g6) {} (r000);
    \draw (r0) -- node[gvertex] (g7) {} (r01) -- node[gvertex] (g8) {} (r011);
    \draw (r00) -- node[gvertex] (g9) {} (r001);

     % Draw dots for continuation
    \node at (-1.75,-3.5) {\vdots}; % Below r000
    \node at (-1.25,-3.5) {\vdots}; % Below r001
    \node at (-0.25,-3.5) {\vdots}; % Below r011
    \node at (1.75,-3.5) {\vdots}; % Below r111
  \end{scope}

    \begin{scope}[xshift=5cm]
       % Draw vertices and edges for the first three levels
    \node[vertex] (r) at (0,0) {};
    \node[bvertex] (r0) at (-1,-1) {};
    \node[vertex] (r1) at (1,-1) {};
    \node[bvertex] (r00) at (-1.5,-2) {};
    \node[vertex] (r01) at (-.5,-2) {};
    \node[vertex] (r000) at (-1.75,-3) {};
    \node[vertex] (r001) at (-1.25,-3) {};
    \node[vertex] (r011) at (-.25,-3) {};
    \node[vertex] (r11) at (1.5,-2) {};
    \node[vertex] (r111) at (1.75,-3) {};

   % Draw edges with green vertices for subdivision
    \draw (r) -- node[gvertex] (g1) {} (r1) -- node[gvertex] (g2) {} (r11) -- node[gvertex] (g3) {} (r111);
    \draw (r) -- node[gvertex] (g4) {} (r0) -- node[gvertex] (g5) {} (r00) -- node[gvertex] (g6) {} (r000);
    \draw (r0) -- node[gvertex] (g7) {} (r01) -- node[gvertex] (g8) {} (r011);
    \draw (r00) -- node[gvertex] (g9) {} (r001);

    % Draw lollipops for each green vertex
    \foreach \i in {1,2,3,7,8,9} {
        \node[lvertex] (l\i) at ($(g\i) + (0.25, 0.25)$) {};
        \draw[Turquoise] (g\i) -- (l\i);
        \draw[Turquoise] (l\i) to[out=0,in=90,looseness=7] (l\i);
    }
    
    \foreach \i in {4,5,6} {
        \node[lvertex] (l\i) at ($(g\i) + (-0.25, 0.25)$) {};
        \draw[Turquoise] (g\i) -- (l\i);
        \draw[Turquoise] (l\i) to[out=180,in=90,looseness=7] (l\i);
    }

    %Draw lollipops on original black vertices
     \foreach \i in {1,11,01} {
        \node[lvertex] (b\i) at ($(r\i) + (0.25, 0.25)$) {};
        \draw[Turquoise] (r\i) -- (b\i);
        \draw[Turquoise] (b\i) to[out=0,in=90,looseness=7] (b\i);
    }

    %\node[lvertex] (b000) at ($(r000) + (-0.25, 0.25)$) {};
       % \draw[Turquoise] (b000) -- (r000);
       % \draw[Turquoise] (b000) to[out=180,in=90,looseness=7] (b000);
    
    \node[lvertex] (t) at ($(r) + (0, 0.37)$) {};
        \draw[Turquoise] (r) -- (t);
        \draw[Turquoise] (t) to[out=45,in=135,looseness=7] (t);

    % Draw dots for continuation
    \node at (-1.75,-3.5) {\vdots}; % Below r000
    \node at (-1.25,-3.5) {\vdots}; % Below r001
    \node at (-0.25,-3.5) {\vdots}; % Below r011
    \node at (1.75,-3.5) {\vdots}; % Below r111
\end{scope}
  
\end{tikzpicture}
 \caption{Modifying a graph $\Gamma_C^*$ (left) so that it is in $\cG_3$ by subdividing edges (middle), and then adding lollipops to degree $2$ vertices (right).}
    \label{fig:lollipop}
\end{figure}

We claim that this map is continuous.
Indeed, fix any basic open set $\cU_{(\Gamma,r)}\subset \cG_3$, and let $C \in \mathcal{K}(2^\N)$ be a point in the preimage, assuming it is nonempty.
Then every point $C' \in \mathcal{K}(2^\N)$ that agrees with $C$ on $2^{\le r}$ is also in the preimage. That is, the preimage contains the open set
\[
\{ C' \in \mathcal{K}(2^\N) : \text{ for each } v \in 2^{\le r}, C'\cap [v] = \emptyset \iff C \cap [v] = \emptyset \}.
\qedhere
\]
\end{proof}

Before showing the other reduction, we show that given $\Gamma \in \mathcal{MG}_\star$, we can canonically represent the endspace pair $(E(\Gamma),E_\ell(\Gamma))$ as sets of infinite branches of a tree on $\N$.

\begin{PROP}\label{prop:end_space_map_construction}
    There are canonical maps $E_\star: \MG_\star \to \mathcal{K}(\N^\N)$, $\Gamma \mapsto C_\Gamma$ and $E_{\ell,\star}: \MG_\star \to \mathcal{K}(\N^\N)$, $\Gamma \mapsto K_\Gamma$ such that $\left(C_\Gamma,K_{\G}\right)$ is pairwise homeomorphic to $\left(E(\Gamma),E_\ell(\Gamma)\right)$.
\end{PROP}

\begin{proof}
       Fix $\Gamma \in \mathcal{MG}_\star$. 
   Throughout, $\Gamma[>N]$ denotes the induced subgraph of $\Gamma$ on the vertices larger than $N$.
   We construct a tree $\tau_\Gamma$ on $\N$ as follows, starting with the root $\emptyset$.

   We will label subsets of $\Gamma$ with $U_x$ for finite strings $x \in \N^{<\N}$.
   First, let $U_\emptyset \defeq \Gamma$.
     Let $U_i$ be the $i^{\text{th}}$ connected component of $\Gamma[>0]$ (for each $i< m < \infty$), where the ordering of the $U_i$ is inherited from the usual order on the vertices of $\Gamma$. 
    Similarly, assuming we've constructed the $U_x$ for each $|x|=n$, 
    each $U_{x}$ splits into finitely many connected components when we remove vertex $n$. 
     Let $U_{x^\smallfrown i}$ be the $i^{\text{th}}$ component of $U_x$.
    In fact, at most one component will ``split" into more than one component.

    Let $\tau_\Gamma$ be the tree on $\N$ with vertices $x$ for each $U_x$ constructed above. 
    Let $\sigma_{\G}$ be the subtree of $\tau_{\G}$ with $x\in \sigma_{\Gamma}$ when $\rk(U_x)>0$. 
    The sets of infinite branches in $\tau_\Gamma$ and $\sigma_\Gamma$, denoted $C_\Gamma$ and $K_\Gamma$ respectively, are pairwise homeomorphic to the endspace pair $(E(\Gamma), E_\ell(\Gamma))$ by construction. 
\end{proof}

\begin{RMK}
    We note that for $\Gamma \in \mathcal{MG}_{\le k}$ or $\Gamma \in \mathcal{MG}_{k}$, each $U_i$ splits into at most $(k-1)$-many connected components, so the end space is canonically represented as subsets of $k^\N$.
\end{RMK}

For a detailed proof that the construction in \Cref{prop:end_space_map_construction} is Borel, see \Cref{sec:appendix}.

\begin{THM}\label{thm:main}
    $(\MG_\star, \PHEquiv) \le_B (\mathcal{K}(2^\N), \text{Homeo})$.
\end{THM}

\begin{proof}
We use a similar argument to the one in \cite{CG01}. 
    Let $\mathcal{L}$ be the language of Boolean algebras $\{\vee, \wedge, -, 0, 1\}$, along with an extra constant symbol $n$, unary relation symbols $\mathfrak{C}$ and $\mathfrak{K}$, and unary function symbol $f$.
    Let $\mathcal{T}$ be the axioms of Boolean algebras along with a sentence asserting that $\mathfrak{C}$ and $\mathfrak{K}$ are disjoint, and that $f$ is a surjection from $\mathfrak{C}$ to $\mathfrak{K}$.
    Let $\mathcal{M}$ be the space of models of $\mathcal{T}$ with universe $\N$.
    
    To each $\Gamma \in \MG_\star$ with rank $\rk(\Gamma) \in \N \cup \{ \infty \}$, end space representation $C_\Gamma \subset \N^\N$, and end space accumulated by loops $K_\Gamma \subseteq C_\G$, we associate the countable structure $\mathcal{B}_\Gamma \in \mathcal{M}$:
    \[
    (n_\Gamma, \mathfrak{C}_\Gamma, \mathfrak{K}_\Gamma, f_\Gamma, \vee, \wedge, -, 0, 1)
    \]
    where:
    \begin{itemize}
        \item $n_\Gamma = \text{rk}_\Gamma +1$ if $\text{rk}_\Gamma \in \N $ or $n_\Gamma = 0$ if $\text{rk}_\Gamma = \infty $,
        \item $\mathfrak{C}_\Gamma$ is the Boolean algebra of clopen subsets of $C_\Gamma$,
        \item $\mathfrak{K}_\Gamma$ is the Boolean algebra of clopen subsets of $K_\Gamma$,
        \item   $f_\Gamma$ is the (surjective) map $f_\Gamma: \mathfrak{C}_\Gamma \to \mathfrak{K}_\Gamma$ defined by $Y \mapsto Y \cap K_\Gamma$,
        \item and $\vee, \wedge, -, 0, 1$ are the usual Boolean operations.
    \end{itemize}
    
    By Stone duality, $\Gamma_1 \PHEquiv \Gamma_2$ if and only if $\mathcal{B}_{\Gamma_1} \isom \mathcal{B}_{\Gamma_2}$. 
    The assignment $\Gamma \mapsto \mathcal{B}_\Gamma$ is Borel, because the assignments $\Gamma \mapsto C_\Gamma$ and $\Gamma \mapsto K_\Gamma$, from $\MG_\star \to \mathcal{K}(\N^\N)$ are Borel.
    The map $\Gamma \mapsto n_\Gamma \in \N$ is also easily seen to be Borel.
    The Boolean operations are Borel over the space $\mathcal{K}(\N^\N)$.

    It remains to verify that the assignments $\Gamma \mapsto \mathfrak{C}_\Gamma$ and $\Gamma \mapsto \mathfrak{K}_\Gamma$ are Borel.
    As in \cite{CG01}, we form a canonical enumeration of the clopen subsets of $\N^\N$, denoted $\{\mathcal{O}'_i\}_{i \in \N}$.
    To disjointify $\mathfrak{C}_\Gamma$ and $\mathfrak{K}_\Gamma$, we modify the enumeration to $\{\mathcal{O}_i\}_{i \in \N}$, where $\mathcal{O}_{2i} = \mathcal{O}_{2i+1} = \mathcal{O}'_i$.
     
    We first enumerate the clopen subsets of $C_\Gamma$ and $K_\Gamma$ respectively as $\{ \mathcal{O}_{2i} \cap C_\Gamma \}_{i \in \N}$ and $\{ \mathcal{O}_{2i+1} \cap K_\Gamma \}_{i \in \N}$, 
     possibly with repetitions. 
     Now let $\rho$ be the enumeration of the sequence $\{ \mathcal{O}_{2i} \cap C_\Gamma \}_{i \in \N}$ without repetitions (but preserving the order), and let
     $\rho_\ell$ be the enumeration of the sequence $\{ \mathcal{O}_{2i+1} \cap K_\Gamma \}_{i \in \N}$ without repetitions (but preserving the order).
     Then $\rho$ and $\rho_\ell$ are Borel functions that uniquely describe $\mathfrak{C}_\Gamma$ and $\mathfrak{K}_\Gamma$.
\end{proof} 

As an immediate corollary of 
\Cref{graph_spaces_are_bireducible}, \Cref{lem:realizing_closed_subsets},  and \Cref{thm:main}, we have the following.

\begin{COR}
The equivalence relations 
$(\MG_3, \PHEquiv)$, $(\cG_3, \PHEquiv)$, and $(\mathcal{K}(2^\N), \Homeo)$ are all Borel bireducible.
\end{COR}

\subsection{Extending the results past the three-regular case}\label{ssec:related_results_graphs}
In this subsection, we extend the results for $\cG_3$ and $\MG_3$ to apply to the spaces of graphs with different degree restrictions.

\begin{PROP}[Intro \Cref{thm:GenericIntro} expanded]\label{prop:kgeneric}
     For $k\geq 3$, the equivalence class of one-ended graphs with infinite rank is comeager in each of $\cG_k$, $\cG_{\leq k}$, and $\cG_{<\infty}$.
\end{PROP}

\begin{proof}
\textbf{Infinite rank is dense $G_\delta$:}
As in \Cref{prop:3generic}, having rank at least $n$ is an open property in any of the spaces, since it is witnessed on a finite subgraph.
To see that having rank at least $n$ is dense in these spaces, it suffices to show that for any finite subgraph $G$ satisfying the degree-regularity properties, we can construct a graph $\Gamma$ of rank at least $n$ with the desired degree-regularity properties and such that $\Gamma$ has $G$ as a subgraph. 

Because $k$-regular graphs are also finite degree and degree at most $k$, it suffices to show this for $\cG_k$.
For $\cG_k$, to construct $\Delta_U^k$ as in \Cref{prop:3generic}: subdivide $n$ edges of $\G$ outside of $B_r(\G)$ and attach the following graphs to the new vertices. 
    \begin{itemize}
        \item For even $k$, attach $k-2$ self loops.
        \item For odd $k$, attach a \emph{barrel cactus graphs} as in \cref{fig:Oddk}. That is, for each vertex from the subdivision, add another vertex, connect them by $k-2$ edges, and add a self-loop to the new vertex.   
    \end{itemize}

\textbf{One-endedness is generic:}
As in \Cref{prop:3generic}, let $V_n$ be the set of $\Gamma \in \cG_\star$ such that each pair of vertices in the sphere of radius $n$ of $\Gamma$ is path-connected outside of $B_n(\Gamma)$. 
Again as in \Cref{prop:3generic}, membership to $V_n$ is witnessed on a finite subgraph, so $V_n \subseteq \cG_\star$ is open.

To see that $V_n$ is dense in $\cG_\star$, we start with any graph $\G\in \cG_k$ and construct $k$-regular graphs $\Delta_V^k \in V_n$ with $B_r(\G) = B_r(\Delta_V^k)$.
To construct $\Delta_V^k$: For each pair of connected components of $\G \setminus B_r(\G)$ subdivide an edge in each and add $k-2$ edges between the new vertices.  
 Because a $k$-regular graph is also in $\cG_{\leq k}$, and $\cG_{<\infty}$, this concludes the proof for all spaces $\cG_\star$.
\end{proof}

By the same proof as \Cref{cor:3generic_for_markedgraphs}, we have the following.

\begin{COR}
    For $k\geq 3$, the equivalence class of one-ended graphs with infinite rank is comeager in each of $\MG_k$, $\MG_{\leq k}$, and $\MG_{<\infty}$.
\end{COR}

\begin{PROP}\label{lem:k_realizes}
$(\mathcal{K}(2^\N) , \text{Homeo}) \le_B (\cG_k ,\PHEquiv)$ for every $k \ge 3$. 
\end{PROP}

\begin{proof}
    We begin as in \Cref{lem:realizing_closed_subsets} with $C\subset 2^{\N}$ and the graph $\G_C^*$. For each value of $k$ we describe how to modify $\G_C^*$ to get a graph in $\cG_k$ with $E(\G_C^k)=E_{\ell}(\G_C^k)=C$.
    
    For $k$ odd, perform the following modifications to $\G_C^*$: \begin{itemize}
    \item for each degree $3$ vertex add $\frac{k-3}{2}$ self-loops, 
    \item for each degree $2$ vertex add a new vertex, connect them by $k-2$ edges, and add a self-loop to the new vertex. 
    \end{itemize} If there is a degree one vertex, add $\frac{k-1}{2}$ self-loops. This process adds loops to each vertex, so ultimately every end is accumulated by loops. See \cref{fig:Oddk} for an example where $k=7$.

    \begin{figure}[h]
        \centering
                  \begin{tikzpicture}
% Define the style for vertices
  \tikzstyle{vertex}=[circle, fill=black, inner sep=2pt, minimum size=4pt]
  \tikzstyle{bvertex}=[circle, fill=Bittersweet, inner sep=2pt, minimum size=4pt]
  \tikzstyle{gvertex}=[circle, fill=ForestGreen, inner sep=2pt, minimum size=4pt]
  \tikzstyle{lvertex}=[circle, fill=Turquoise, inner sep=2pt, minimum size=4pt]
 
 \begin{scope}[xshift=-5cm]
    % Draw vertices and edges for the first three levels
    \node[vertex] (r) at (0,0) {};
    \node[bvertex] (r0) at (-1,-1) {};
    \node[vertex] (r1) at (1,-1) {};
    \node[bvertex] (r00) at (-1.5,-2) {};
    \node[vertex] (r01) at (-.5,-2) {};
    \node[vertex] (r000) at (-1.75,-3) {};
    \node[vertex] (r001) at (-1.25,-3) {};
    \node[vertex] (r011) at (-.25,-3) {};
    \node[vertex] (r11) at (1.5,-2) {};
    \node[vertex] (r111) at (1.75,-3) {};

    %Draw edges
    \draw (r) -- (r1) -- (r11) -- (r111);
    \draw (r) -- (r0) -- (r00) -- (r000);
    \draw (r0) -- (r01) -- (r011);
    \draw (r00) -- (r001);

    % Draw dots for continuation
    \node at (-1.75,-3.5) {\vdots}; % Below r000
    \node at (-1.25,-3.5) {\vdots}; % Below r001
    \node at (-0.25,-3.5) {\vdots}; % Below r011
    \node at (1.75,-3.5) {\vdots}; % Below r111
  \end{scope}

  \begin{scope}
    % Draw vertices and edges for the first three levels
    \node[vertex] (r) at (0,0) {};
    \node[bvertex] (r0) at (-1,-1) {};
    \node[vertex] (r1) at (1,-1) {};
    \node[bvertex] (r00) at (-1.5,-2) {};
    \node[vertex] (r01) at (-.5,-2) {};
    \node[vertex] (r000) at (-1.75,-3) {};
    \node[vertex] (r001) at (-1.25,-3) {};
    \node[vertex] (r011) at (-.25,-3) {};
    \node[vertex] (r11) at (1.5,-2) {};
    \node[vertex] (r111) at (1.75,-3) {};

     %Draw edges
    \draw (r) -- (r1) -- (r11) -- (r111);
    \draw (r) -- (r0) -- (r00) -- (r000);
    \draw (r0) -- (r01) -- (r011);
    \draw (r00) -- (r001);

    %Draw Loops
        %Draw Barrell Cactus on right vertices
     \foreach \i in {1,11,01} {
        \node[lvertex] (b\i) at ($(r\i) + (0.5, 0.5)$) {};
        \draw[Turquoise] (r\i) -- (b\i);
        \draw[Turquoise] (r\i) to[bend right=50] (b\i);
        \draw[Turquoise] (r\i) to[bend left=50] (b\i);
        \draw[Turquoise] (r\i) to[bend right=25] (b\i);
        \draw[Turquoise] (r\i) to[bend left=25] (b\i);
        \draw[Turquoise] (b\i) to[out=0,in=90,looseness=7] (b\i);
    }

        %%On top
    \node[lvertex] (b) at ($(r)+(0,.707)$) {};    
    \draw[Turquoise] (r) -- (b);
        \draw[Turquoise] (r) to[bend right=50] (b);
        \draw[Turquoise] (r) to[bend left=50] (b);
        \draw[Turquoise] (r) to[bend right=25] (b);
        \draw[Turquoise] (r) to[bend left=25] (b);
        \draw[Turquoise] (b) to[out=45,in=135,looseness=7] (b);
        
        %%On left Branches
    \draw[Turquoise] (r0) to[out=200,in=140,looseness=9] (r0);
    \draw[Turquoise] (r0) to[out=140,in=80,looseness=9] (r0);
    \draw[Turquoise] (r00) to[out=200,in=140,looseness=9] (r00);
    \draw[Turquoise] (r00) to[out=140,in=80,looseness=9] (r00);

    % Draw dots for continuation
    \node at (-1.75,-3.5) {\vdots}; % Below r000
    \node at (-1.25,-3.5) {\vdots}; % Below r001
    \node at (-0.25,-3.5) {\vdots}; % Below r011
    \node at (1.75,-3.5) {\vdots}; % Below r111
  \end{scope}

  \end{tikzpicture}
        \caption{Modifying the graph $\G_C^*$ so that it is in $\cG_7$ by attaching ``barrel cactus graphs."}
        \label{fig:Oddk}
    \end{figure}

For $k$ even, $k\neq 4$, we first double every edge of $\Gamma_C^*$ so that every vertex has degree $2$, $4$ or $6$. Then for each vertex $v$, add $\frac{k-\deg(v)}{2}$ self-loops so that the resulting degree of $v$ is $k$.

    \begin{figure}
        \centering
                \begin{tikzpicture}
% Define the style for vertices
  \tikzstyle{vertex}=[circle, fill=black, inner sep=2pt, minimum size=4pt]
  \tikzstyle{bvertex}=[circle, fill=Bittersweet, inner sep=2pt, minimum size=4pt]
  \tikzstyle{gvertex}=[circle, fill=ForestGreen, inner sep=2pt, minimum size=4pt]
  \tikzstyle{lvertex}=[circle, fill=Turquoise, inner sep=2pt, minimum size=4pt]
 
 \begin{scope}[xshift=-5cm]
    % Draw vertices and edges for the first three levels
    \node[vertex] (r) at (0,0) {};
    \node[bvertex] (r0) at (-1,-1) {};
    \node[vertex] (r1) at (1,-1) {};
    \node[bvertex] (r00) at (-1.5,-2) {};
    \node[vertex] (r01) at (-.5,-2) {};
    \node[vertex] (r000) at (-1.75,-3) {};
    \node[vertex] (r001) at (-1.25,-3) {};
    \node[vertex] (r011) at (-.25,-3) {};
    \node[vertex] (r11) at (1.5,-2) {};
    \node[vertex] (r111) at (1.75,-3) {};

    %Draw edges
    \draw (r) -- (r1) -- (r11) -- (r111);
    \draw (r) -- (r0) -- (r00) -- (r000);
    \draw (r0) -- (r01) -- (r011);
    \draw (r00) -- (r001);

    % Draw dots for continuation
    \node at (-1.75,-3.5) {\vdots}; % Below r000
    \node at (-1.25,-3.5) {\vdots}; % Below r001
    \node at (-0.25,-3.5) {\vdots}; % Below r011
    \node at (1.75,-3.5) {\vdots}; % Below r111
  \end{scope}

 \begin{scope}
    % Draw vertices and edges for the first three levels
    \node[vertex] (r) at (0,0) {};
    \node[bvertex] (r0) at (-1,-1) {};
    \node[vertex] (r1) at (1,-1) {};
    \node[bvertex] (r00) at (-1.5,-2) {};
    \node[vertex] (r01) at (-.5,-2) {};
    \node[vertex] (r000) at (-1.75,-3) {};
    \node[vertex] (r001) at (-1.25,-3) {};
    \node[vertex] (r011) at (-.25,-3) {};
    \node[vertex] (r11) at (1.5,-2) {};
    \node[vertex] (r111) at (1.75,-3) {};

     % Draw double edges with bend
    \draw (r) to[bend right=15] (r1);
    \draw (r) to[bend left=15] (r1);
    
    \draw (r1) to[bend right=15] (r11);
    \draw (r1) to[bend left=15] (r11);
    
    \draw (r11) to[bend right=15] (r111);
    \draw (r11) to[bend left=15] (r111);
    
    \draw (r) to[bend right=15] (r0);
    \draw (r) to[bend left=15] (r0);
    
    \draw (r0) to[bend right=15] (r00);
    \draw (r0) to[bend left=15] (r00);
    
    \draw (r00) to[bend right=15] (r000);
    \draw (r00) to[bend left=15] (r000);
    
    \draw (r0) to[bend right=15] (r01);
    \draw (r0) to[bend left=15] (r01);
    
    \draw (r01) to[bend right=15] (r011);
    \draw (r01) to[bend left=15] (r011);
    
    \draw (r00) to[bend right=15] (r001);
    \draw (r00) to[bend left=15] (r001);

    % Draw dots for continuation
    \node at (-1.75,-3.5) {\vdots}; % Below r000
    \node at (-1.25,-3.5) {\vdots}; % Below r001
    \node at (-0.25,-3.5) {\vdots}; % Below r011
    \node at (1.75,-3.5) {\vdots}; % Below r111
  \end{scope}

  \begin{scope}[xshift=5cm]
    % Draw vertices and edges for the first three levels
    \node[vertex] (r) at (0,0) {};
    \node[bvertex] (r0) at (-1,-1) {};
    \node[vertex] (r1) at (1,-1) {};
    \node[bvertex] (r00) at (-1.5,-2) {};
    \node[vertex] (r01) at (-.5,-2) {};
    \node[vertex] (r000) at (-1.75,-3) {};
    \node[vertex] (r001) at (-1.25,-3) {};
    \node[vertex] (r011) at (-.25,-3) {};
    \node[vertex] (r11) at (1.5,-2) {};
    \node[vertex] (r111) at (1.75,-3) {};

     % Draw double edges with bend
    \draw (r) to[bend right=15] (r1);
    \draw (r) to[bend left=15] (r1);
    
    \draw (r1) to[bend right=15] (r11);
    \draw (r1) to[bend left=15] (r11);
    
    \draw (r11) to[bend right=15] (r111);
    \draw (r11) to[bend left=15] (r111);
    
    \draw (r) to[bend right=15] (r0);
    \draw (r) to[bend left=15] (r0);
    
    \draw (r0) to[bend right=15] (r00);
    \draw (r0) to[bend left=15] (r00);
    
    \draw (r00) to[bend right=15] (r000);
    \draw (r00) to[bend left=15] (r000);
    
    \draw (r0) to[bend right=15] (r01);
    \draw (r0) to[bend left=15] (r01);
    
    \draw (r01) to[bend right=15] (r011);
    \draw (r01) to[bend left=15] (r011);
    
    \draw (r00) to[bend right=15] (r001);
    \draw (r00) to[bend left=15] (r001);

    %Draw Loops
        %%On right Branches
    \draw[Turquoise] (r1) to[out=-20,in=40,looseness=9] (r1);
    \draw[Turquoise] (r1) to[out=40,in=100,looseness=9] (r1);
    \draw[Turquoise] (r11) to[out=-20,in=40,looseness=9] (r11);
    \draw[Turquoise] (r11) to[out=40,in=100,looseness=9] (r11);
    \draw[Turquoise] (r01) to[out=-20,in=40,looseness=9] (r01);
    \draw[Turquoise] (r01) to[out=40,in=100,looseness=9] (r01);

        %%On top
    \draw[Turquoise] (r) to[out=0,in=90,looseness=8] (r);
    \draw[Turquoise] (r) to[out=90,in=180,looseness=8] (r);
        
        %%On left Branches
    \draw[Turquoise] (r0) to[out=180,in=90,looseness=8] (r0);
    \draw[Turquoise] (r00) to[out=180,in=90,looseness=8] (r00);

    % Draw dots for continuation
    \node at (-1.75,-3.5) {\vdots}; % Below r000
    \node at (-1.25,-3.5) {\vdots}; % Below r001
    \node at (-0.25,-3.5) {\vdots}; % Below r011
    \node at (1.75,-3.5) {\vdots}; % Below r111
  \end{scope}

  \end{tikzpicture}
        \caption{Modifying the graph $\G^*_C$ so that it is in $\cG_8$, first by doubling edges, then adding self-loops.}
        \label{fig:enter-label}
    \end{figure}

Finally, the case where $k=4$ requires a bit more subtlety. First double every edge of $\G_C^*$ and then perform a local PHE at every degree $6$ vertex to split it into two degree $4$ vertices, as in \cref{fig:k4}. Because these are disjoint, compactly supported, homotopy equivalences, they do not change the PHE type of the graph, and in particular do not change the end space. If the root was degree one, then add a loop after doubling so that it is degree $4$. Note that after doubling edges, every end of the graph was accumulated by loops.

\begin{figure}
    \centering
    \begin{tikzpicture}
\tikzstyle{vertex}=[circle, fill=Turquoise, inner sep=2pt, minimum size=4pt]
% Left figure
% Blue node
\node[vertex] (A) at (0,0) {};

% Black lines
\draw[thick] (-1,1) -- (A) -- (1,-1);
\draw[thick] (-1,-1) -- (A) -- (1,1);
\draw[thick] (0,-1.3) -- (A) -- (0,1.3);

% Right figure
% Blue nodes
\node[vertex] (B) at (4,.5) {};
\node[vertex] (C) at (4,-.5) {};

% Blue line
\draw[thick, Turquoise] (B) -- (C);

% Black lines
\draw[thick] (B) -- (5,1.5);
\draw[thick] (B) -- (4,1.8);
\draw[thick] (B) -- (3,1.5);

\draw[thick] (C) -- (5,-1.5);
\draw[thick] (C) -- (4,-1.8);
\draw[thick] (C) -- (3,-1.5);

% Arrow
\draw[-{Stealth[length=3mm, width=2mm]}, thick] (1.3,0) -- (2.7,0);

% Text
\node at (2, -0.5) {Expand};

\end{tikzpicture}
    \caption{A local PHE at a degree $6$ vertex to split it into two degree $4$ vertices. }
    \label{fig:k4}
\end{figure}

As in \Cref{lem:realizing_closed_subsets}, these maps are Borel reductions.
\end{proof}

\begin{COR}\label{cor:k_realizes}
    $(\mathcal{K}(2^\N) , \text{Homeo}) \le_B (\MG_k ,\PHEquiv)$ for every $k \ge 3$.
\end{COR}

Because $\MG_k\subset \MG_{\leq k}\subset \MG_{\infty}$ and $\cG_k\subset \cG_{\leq k}\subset \cG_{\infty}$, \Cref{lem:k_realizes} and \Cref{cor:k_realizes} apply to all spaces $\cG_\star$ and $\MG_\star$.
Since \Cref{thm:main} was proved in general for $\MG_\star$, we obtain the following.

\begin{COR}[Intro \Cref{thm:mainIntro} expanded]
    For $k\geq 3$, the proper homotopy equivalence relations on the spaces $\cG_k$, $\cG_{\leq k}$, and $\cG_{<\infty}$ are each bireducible with $C_\infty$. 
\end{COR}

\section{Results for surfaces}\label{sec:results_surfaces}

\subsection{Surfaces}\label{ssec:surfaces}

By \textbf{surface} we mean a second countable, Hausdorff, orientable, 2-manifold without boundary.
Two compact surfaces are homeomorphic if they have the same number of genus. 
The classification of non-compact surfaces predates, but is in direct analogy with, the classification of infinite graphs up to proper homotopy equivalence. It is due independently to Ker\'ekj\'art\'o \cite{Kerekjarto} and Richards \cite{Richards}. 

\begin{THM}
    Two surfaces $\Sigma$ and $\Sigma'$ are homeomorphic exactly when $g(\Sigma)=g(\Sigma')$ and \\$(E(\Sigma),E_{g}(\Sigma)) \simeq (E(\Sigma'),E_{g}(\Sigma'))$. 
\end{THM}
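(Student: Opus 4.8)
The plan is to establish necessity by homeomorphism-invariance of the relevant data, and sufficiency by an exhaustion-and-matching argument that reduces each extension step to the finite-type classification quoted above. For necessity, given a homeomorphism $h:\Sigma \to \Sigma'$, I would observe that $h$ is in particular proper and hence carries compact exhaustions to compact exhaustions, so it induces a homeomorphism $h_*$ of inverse limits $E(\Sigma)\to E(\Sigma')$ (the end-space construction is functorial for proper maps). Since genus is a topological invariant, $g(\Sigma)=g(\Sigma')$; and since whether an end is accumulated by genus is read off from the genus of its neighborhoods, $h_*$ carries $E_g(\Sigma)$ onto $E_g(\Sigma')$. Thus the end pairs are isomorphic via $h_*$, giving the forward direction.

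For sufficiency I would first invoke Radó's theorem that every surface is triangulable, and use it to fix an exhaustion $\Sigma=\bigcup_n C_n$ by compact connected bordered subsurfaces with $C_n\subset\operatorname{int} C_{n+1}$, chosen so that every boundary curve is essential and every component of $\Sigma\setminus C_n$ is noncompact. The end pair is then recovered as $E(\Sigma)=\varprojlim_n \pi_0(\Sigma\setminus C_n)$, with an end lying in $E_g(\Sigma)$ exactly when every complementary component in its defining thread has positive genus. Each layer $C_{n+1}\setminus\operatorname{int} C_n$ is itself a compact bordered surface, determined up to homeomorphism (rel boundary) by its genus and its number of boundary curves on each side — this is the finite-type input supplied by the classification of compact surfaces quoted above.

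Given an end-pair isomorphism $\phi:(E(\Sigma),E_g(\Sigma))\to(E(\Sigma'),E_g(\Sigma'))$ together with $g(\Sigma)=g(\Sigma')$, the plan is a back-and-forth refinement of the two exhaustions. Because the end spaces are Stone spaces, the components of $\Sigma\setminus C_n$ induce a clopen partition of $E(\Sigma)$, and $\phi$ matches clopen partitions of $E(\Sigma)$ with those of $E(\Sigma')$. Interleaving the refinements coming from the $C_n$ and the $C_n'$, I would produce compatible exhaustions in which the complementary pieces of $C_n$ correspond under $\phi$ to those of $C_n'$ with matching boundary counts, and in which the genus is distributed so that a thread carries positive genus at every stage precisely when its end lies in $E_g$. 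I would then build the homeomorphism inductively: fix a homeomorphism $C_0\to C_0'$ and extend it one layer at a time, using the finite-type classification to map $C_{n+1}\setminus\operatorname{int} C_n$ onto the corresponding layer of $\Sigma'$ while agreeing with the previous stage on $\partial C_n$. The union of these maps is the desired homeomorphism realizing $\phi$ on ends.

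The hard part will be arranging these compatible exhaustions while correctly distributing genus. The isomorphism $\phi$ is given only abstractly as a homeomorphism of Stone spaces preserving $E_g$, so turning it into a concrete correspondence of complementary subsurfaces requires refining both exhaustions \emph{simultaneously} until their induced clopen partitions of the ends are carried to one another by $\phi$ — a bookkeeping that must be interleaved on the two sides. When the genus is infinite there is the additional requirement of routing the handles so that positivity of genus along each thread matches membership in $E_g$ at every finite stage, which is where $g(\Sigma)=g(\Sigma')$ and the preservation of $E_g$ by $\phi$ are used. Once this combinatorial matching is in place, each extension-over-a-layer step is routine from the finite-type classification.
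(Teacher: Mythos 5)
This theorem is not proved in the paper at all: it is the classical Ker\'ekj\'art\'o--Richards classification of (orientable, boundaryless) surfaces, quoted with citations to \cite{Kerekjarto} and \cite{Richards}. So there is no in-paper argument to compare against; what you have written is a plan for reproving the classical result. Your outline is essentially the standard one. The necessity direction is correct and complete in substance: a homeomorphism is proper, induces a homeomorphism of the inverse limits defining the end spaces, and preserves both total genus and the property of an end being accumulated by genus. For sufficiency, the exhaustion-plus-back-and-forth strategy is the right shape, and it is close to Richards' actual argument, which however organizes the bookkeeping differently: rather than interleaving two arbitrary exhaustions, Richards shows every surface is homeomorphic to a \emph{canonical} surface built directly from the triple $(g, E, E_g)$ (a sphere minus a closed totally disconnected set, with handles attached along a sequence converging exactly to $E_g$). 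Reducing both $\Sigma$ and $\Sigma'$ to the same canonical model sidesteps the hardest part of your plan.

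That hardest part is a genuine gap as written. Two specific points. First, you cannot in general arrange that the layer $C_{n+1}\setminus\operatorname{int}C_n$ of $\Sigma$ is homeomorphic to the corresponding layer of $\Sigma'$: the genera of individual layers are not invariants, so the induction must instead map $C_n$ onto \emph{some} increasing family of compact subsurfaces $D_n\subseteq\Sigma'$ (not the $C'_n$ themselves), choosing $D_{n+1}$ large enough to absorb whatever genus and boundary structure $C_{n+1}\setminus\operatorname{int}C_n$ carries; making this choice possible at every stage is exactly where one needs the lemma that genus in a noncompact complementary component can always be ``pushed outward,'' and (when $g=\infty$) that every neighborhood of an end in $E_g$ contains unbounded genus while ends outside $E_g$ have planar neighborhoods. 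Your sketch names this difficulty but does not supply the lemma, and it is the entire content of the theorem. Second, a small but real slip: you ask for exhaustions in which ``every boundary curve is essential,'' which is neither achievable in general (for $\Sigma=\R^2$ every separating curve of a compact exhaustion bounds a disk) nor needed; what you actually need is that every complementary component is noncompact, which \emph{is} always arrangeable by absorbing compact complementary pieces into $C_n$.
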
 

In this statement, $g(\Sigma)\in \N \cup \{0,\infty\}$ is the number of genus, $E(\Sigma)$ is the end space, and $E_g(\Sigma)$ is the subspace of ends accumulated by genus. The endspace of a surface can be defined mutatis mutandis with \Cref{def:Endspace}, replacing $\G$ with $\Sigma$. To define the \textbf{ends accumulated by genus}, replace rank with genus in the definition of ends accumulated by loops.
%For example, a puncture on a surface is an end of that surface, because any small neighborhood is homeomorphic to a disk with the origin removed. 

\subsection{Spaces of Surfaces}

Next we want to realize sets of noncompact surfaces as Polish spaces; this includes infinite-type surfaces as well as finite-type surfaces with punctures. First, we define a \textbf{pair of pants} to be any compact, orientable, genus zero surface with at least one boundary component. If the pair of pants $P$ has $d+1$ boundary components we call them specifically a $d$-legged pair of pants, so that a $3$-holed sphere has two legs (the third boundary component being the waist). We allow $1$-legged pants, which are topologically annuli, and $0$-legged pants, which are topologically disks. 
A \textbf{pants decomposition} of a surface $S$ is a collection of subsurfaces $\cP=\{P_j\}_{j\in I}$ which are each themselves pants, cover the surface $S$, and overlap exactly on their boundary components. Denote the boundary components of $P_j$ by $\{\del_1P_j, \dots, \del_{d+1} P_j\}$. That is, $S$ can be constructed by gluing the pants together along their boundary components by orientation preserving homeomorphisms, and we can write 
\[S=\left(\bigsqcup_{j\in I} P_j \right)/ \left( \del_i P_j \sim \del_k P_{\ell} \right), \] 
with each $\del_i P_j$ appearing in the quotient exactly once. Note that our pants have boundary, so the only possible pants decomposition around a puncture is a sequence of annuli.

For each space of graphs $\cG_{\star}$ discussed in \Cref{ssec:space_of_graphs} we will define an analogous space of surfaces, $\cS_{\star}$. First, let $\hat{\cS_k}$ be the set of surfaces with a $(k-1)$-legged pants decomposition. Similarly, let $\hat{S}_{\leq k}$ denote the set of surfaces with a pants decomposition in which all pants have at most $k-1$ legs, and $\hat{S}_{<\infty}$ denote the set of surfaces with any pants decomposition. 
For a surface with any type of pants decomposition $(S,\cP)$ we associate a graph in the following way. For each pair of paints $P_j$, associate a vertex $v_j$. Whenever a boundary component of $P_j$ is glued to a boundary component of $P_k$, associate an edge $(v_j,v_k)$, note that $j$ and $k$ may be equal. In this way, each $(d-1)$-legged pants corresponds to a vertex of degree $d$. If we consider surfaces with a basepoint interior to the pants decomposition, then we can associate a based graph. This association defines a function $\Phi_{\star}:\hat{\cS}_{\star} \arr \cG_{\star}$ for each choice of $\star$. See \cref{fig:SurfaceToGraph} for an example where the surface is compact. 

\begin{figure}[h]
    \centering
    \begin{tikzpicture}

% Include the image
\node[anchor=south west,inner sep=0, scale=0.5] (image) at (0,0) {\includegraphics[width=20cm]{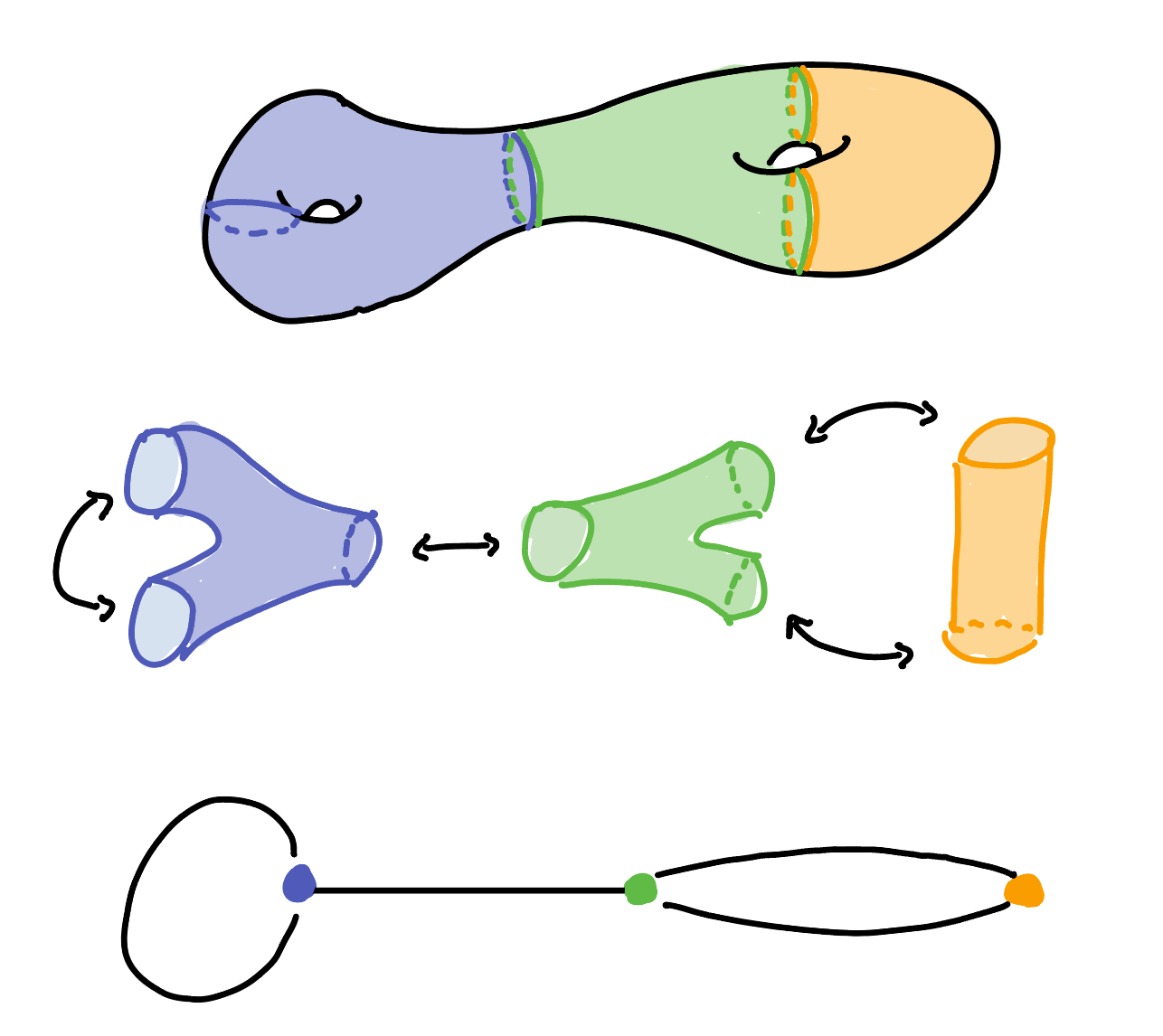}};

% New text
\node at (2.25, 8.5) {\textbf{$P_1$}};
\node at (5.5, 8.5) {\textbf{$P_2$}};
\node at (8, 8.5) {\textbf{$P_3$}};

% New text
\node at (3, 1.5) {\textbf{$v_1$}};
\node at (5.2, 1.5) {\textbf{$v_2$}};
\node at (9.25, 1.5) {\textbf{$v_3$}};

\end{tikzpicture}
    \caption{Constructing a graph from a surface with a pants decomposition.}
    \label{fig:SurfaceToGraph}
\end{figure}

Each map $\Phi_{\star}:\hat{\cS_{\star}} \arr \cG_{\star}$ is surjective, so we can then endow $\hat{\cS_{\star}}$ with the initial topology. Recall that the \emph{initial topology} is the coarsest topology which makes $\Phi_{\star}$ continuous and has the sets \[ \mathcal{B}=\{\Phi_{\star}\inv(\cU) \mid \cU \text{ is open in } \cG_{\star} \} \] as a subbase. 
Note that this topology will not be Hausdorff. For example, the same surface with two pants decompositions in which pants curves are isotopic, but not equal, are indistinguishable by open sets in the topology. 
So, we quotient by the equivalence relation induced by $\Phi_{\star}$, that is $(S,\cP,b)\sim(S',\cP',b')$ if $\Phi_{\star}(S,\cP,b)=\Phi_{\star}(S',\cP',b')$. Call these quotient spaces (with the quotient topologies) $\cS_{\star}$, and note that they are Hausdorff and homeomorphic to the corresponding space $\cG_{\star}$. For detailed explanation, see Corollary 22.3 of Munkres' Topology, \cite{munkres2000topology}. 

Points in $\cS_{\star}$ can be thought of as surfaces, up to homeomorphism, with based pants decompositions, up to homotopy. A puncture on a surface will map to a ray in the associated graph. So, similar to the discussion of which graphs live in which spaces $\cG_{\star}$, there are no surfaces with isolated punctures in $\cS_{k}$, and all homeomorphism types of noncompact surfaces are realized in $\cS_{\leq k}$ for $k\geq 3$.

Let $\Psi_{\star}$ denote the induced homeomorphism from $\cS_{\star} \arr \cG_{\star}$. By construction $\Psi_{\star}$ maps a genus to a loop, and an end to an end. As such, $\Psi_{\star}$ and it's inverse are Borel reductions between proper homotopy equivalence and homeomorphism. From the results on $\cG_{\star}$ we have the following.

\begin{COR}[Intro \Cref{cor:mainIntro} expanded]
    The equivalence relation of homeomorphism on each space $\cS_{\star}$ is bireducible with $C_{\infty}$.
    In each space $\cS_{\star}$ the equivalence class of surfaces with exactly one end which is accumulated by genus is comeager.
\end{COR}

\appendix 
\section{The end space map is Borel}\label{sec:appendix}

Here we carefully prove that the construction in \Cref{prop:end_space_map_construction} is Borel.
We first prove that the map taking a finitely branching tree on $\N$ to its set of infinite branches is Borel. 
This is well known (see, e.g., \cite[Chapter 4]{Kechris:classical_DST}), but we include the proofs here for completeness.
We remark that this map is not continuous (see, e.g., \cite[Exercise 4.32]{Kechris:classical_DST}).

\begin{PROP}
    The space $\mathcal{T}_f$ of finitely branching trees on $\N$ is a Borel subset of $2^{\N^{<\N}}$.
\end{PROP}

\begin{proof}
    $\tau \in 2^{\N^{<\N}}$ is a finitely branching tree if and only if
    \begin{enumerate}
        \item $\tau(\emptyset) = 1$.
        \item For each $x \in \N^{<\N}$ and $i \in \N$ with $\tau(x^\smallfrown i) =1$, $\tau(x)=1$.
        \item For each $x \in \N^{<\N}$, there is $N \in \N$ such that for all $i > N$, $\tau(x^\smallfrown i) = 0$.
    \end{enumerate}
    Condition (1) is open, (2) is $G_\delta$, and (3) is $\Pi^0_3$.
\end{proof}

\begin{PROP}\label{prop:avoiding_cylinder_is_G_delta}
    For any $x \in \N^{<\N}$ and corresponding cylinder set $[x]$, the set
    \[
    \mathcal{T}_{x} \defeq \{\tau \in \mathcal{T}_f : \; E(\tau) \cap [x] \neq \emptyset\}
    \]
    is Borel (in fact $G_\delta$) in $\mathcal{T}_f$.
\end{PROP}

\begin{proof}
    By K\"{o}nig's lemma, since each $\tau \in \mathcal{T}_f$ is finitely branching, $\tau$ has an infinite branch starting from $x$ if and only if the subtree below $x$ is infinite.
    Hence,
    $\mathcal{T}_x = \bigcap_{n\in\N} \bigcup_{y \supseteq x, |y|=n} \tau(y)=1$.
\end{proof}

\begin{PROP}
    The end space map $E:\mathcal{T}_f \to \mathcal{K}(\N^{\N})$ that maps $\tau$ to its set of infinite branches is Borel.
\end{PROP}

\begin{proof}
     It is enough to show that for any open set $U \subseteq \N^\N$, the sets 
    \[
    \mathcal{V}_U \defeq \{\tau \in \mathcal{T}_f : \; E(\tau) \cap U \neq \emptyset \}
    \text{ and }
    \mathcal{W}_U \defeq \{\tau \in \mathcal{T}_f : \; E(\tau) \subseteq U  \}
    \]
    are Borel.
    Write $U$ as a countable union of cylinder sets, $U = \bigcup_{n\in\N}[x_i]$, where each $x_i \in \N^{<\N}$.

    By \Cref{prop:avoiding_cylinder_is_G_delta}, $\mathcal{V}_U$ is $\Sigma_2^0$.
   Since each $E(\tau)$ is compact, \[\mathcal{W}_U = \bigcup_{n\in\N} \{\tau \in \mathcal{T}_f : \; E(\tau) \subseteq \bigcup_{i<n} [x_i] \}.\]
   Again by \Cref{prop:avoiding_cylinder_is_G_delta}, since $\bigcup_{i<n}[x_i]$ is clopen, each $\{\tau \in \mathcal{T}_f : \; E(\tau) \subseteq \bigcup_{i<n} [x_i] \}$ is $\Pi^0_2$, so $\mathcal{W}_U$ is $\Sigma^0_3$.
   Therefore, $E$ is a Borel map. 
\end{proof}

We now show that the map taking a graph in $\MG_\star$ to its corresponding finitely branching tree on $\N$ (resp., its tree with rank $k$) from \Cref{prop:end_space_map_construction} is Borel.

\begin{OBS}\label{obs:connected_conditions}
For any subset $I \subseteq \N$, and any $i,n,m\in \N$, the following subsets of $\MG_\star$ are Borel.
Below, by the ordering of the connected components, we mean the ordering induced by the usual order on $\N$.
That is, we order the components by the labels of their smallest vertices.
    \begin{enumerate}
        \item $\mathcal{R}_{I} \defeq \{\Gamma \in \MG_\star : \G[I] \text{ has positive rank}\}$
        \item $\mathcal{P}_{I,i,j} \defeq \{\Gamma \in \MG_\star : i \text{ is connected to } j \text{ in }\Gamma[\N\setminus I]\}$
        \item $\mathcal{C}_{I,n} \defeq \{\Gamma \in \MG_\star : \Gamma[\N\setminus I] \text{ has at least $n$ connected components}\}$
        \item $\mathcal{C}_{I,n,m} \defeq \{\Gamma \in \MG_\star : m \text{ is the least vertex in the $n^\text{th}$ connected component of } \Gamma[\N\setminus I]\}$
    \end{enumerate}
\end{OBS}

\begin{proof}
\begin{enumerate}
    \item $\mathcal{R}_{I}$ is open, since this condition is witnessed on finitely many vertices.
    \item $\mathcal{P}_{I,i,j}$ is open, since this condition is witnessed on finitely many vertices.
    \item $\mathcal{C}_{I,2}$ is $F_\sigma$ for any $I$, since it is the countable union of complements of the $\mathcal{P}_{I,i,j}$. 
    Similarly, each $\mathcal{C}_{I,n}$ is a countable Boolean combination of $\mathcal{P}_{I,i,j}$s.
    \item $\Gamma \in \mathcal{C}_{I,n,m}$ if and only if 
    \begin{itemize}
        \item $\Gamma \in \mathcal{C}_{I,n}$, and
        \item there are $m_1,...,m_{n-1} < m$ such that the $m_1,...,m_{n-1},m=m_n$ are pairwise disconnected outside of $I$ (that is, $\Gamma \in \bigcap_{i\neq j\le n}\mathcal{P}_{I,m_i,m_j}^c$), and
        \item for all $j < m$, $j$ is connected to one of the $m_i$ outside of $I$ (that is, $\Gamma \in \bigcup_{i< n}\mathcal{P}_{i,j,m_i}$). 
        \qedhere
    \end{itemize}
\end{enumerate}
\end{proof}

\begin{LEM}\label{end_space_maps_are_Borel}
    For each $x \in \N^{<\N}$, the set $\mathcal{H}_x \defeq \{ \G \in \MG_\star : \; x \in \tau_\G \}$ is Borel.
    Hence, the map $\Gamma \mapsto \tau_\Gamma$, $\MG_\star \to {2}^{\N^{<\N}}$ constructed in \Cref{prop:end_space_map_construction} is Borel.
    Similarly, the map $\Gamma \mapsto \tau_\Gamma^\ell$ is also Borel.
\end{LEM}

\begin{proof}
    We will induct on the length of $x$, starting with the observation that $\mathcal{H}_\emptyset = \MG_\star$. 
    We will prove the stronger statement that $\mathcal{H}_x$ is Borel, and for any $I \subseteq \N$, the set $\mathcal{H}_{x,I} \defeq \{ \Gamma \in \mathcal{H}_x : \forall i\in I, \; i\in U_x \}$ is Borel. 
    We will use \Cref{obs:connected_conditions} throughout.
    
    For $|x| = 1$, i.e., $x=n$ for some $n\in\N$, note that $\mathcal{H}_n$ is the collection of $\G \in \MG_\star$ such that $\G[>0]$ has at least $n+1$ connected components.
    This is an $F_\sigma$ set.
    Furthermore, $\mathcal{H}_{n,I}$ is the set of $\Gamma \in \mathcal{H}_n$ such that for some $m \in \N$ and for all $i \in I$, $m$ is the least vertex in the $(n+1)^\text{th}$ connected component of $\Gamma[>0]$ and each $i \in I$ is connected to $m$ in $\Gamma[>0]$.
    Again, this is Borel by \Cref{obs:connected_conditions}.

    Now suppose $\mathcal{H}_x$ is Borel with $|x|=k \ge 1$, and let $n \in \N$.
    Suppose also that for any $I \subseteq \N$, $\mathcal{H}_{x,I} \defeq \{ \Gamma \in \mathcal{H}_x : \forall i\in I, \; i\in U_x \}$ is Borel.
    $\mathcal{H}_{x^\smallfrown n}$ is the set of $\Gamma \in \mathcal{H}_x$ such that $U_x$ splits into at least $n+1$ connected components when we remove vertex $k-1$.
    That is, $\G \in \mathcal{H}_{x^\smallfrown n}$ if and only if $\G \in \mathcal{H}_x$ and there is an $n+1$-tuple of vertices $I = (x_1,...,x_n+1)$ such that each $x_i \in U_x$, and the $x_i$ are pairwise disconnected in $\Gamma[>k-1]$.
    Thus, $\mathcal{H}_{x\smallfrown n}$ is Borel.

    Finally, we need to show that for any $I \subseteq \N$, $\mathcal{H}_{x\smallfrown n, I}$ is Borel.
    Let $j$ be the smallest vertex in $I$.
    $\G \in \mathcal{H}_{x\smallfrown n, I}$ if and only if
    \begin{itemize}
        \item $\G \in \mathcal{H}_{x^\smallfrown i}$, 
        \item for some $N$-tuple of vertices $J = \{x_1,...,x_N\}$ with each $x_i < j$, $\G \in \mathcal{H}_{x,I\cup J}$ and $n \le N < \infty$,
        \item for all $N+1$-tuples $J'=\{y_1,...,y_{N+1}\}$, with each $x_i < j$, $\G \notin \mathcal{H}_{x,I\cup J}$,
        \item for some $n$-tuple of vertices $K = \{ z_1,...,z_n \} \subseteq J$ with each $z_i < j$, the $z_i$ and $j$ are pairwise disconnected in $\Gamma[>k-1]$, and
        \item for each $x_i \in J$, $x_i$ is connected to one of the $z_k$.
    \end{itemize}
     Therefore, $\mathcal{H}_{x^\smallfrown n, I}$ is Borel, as desired.
     That $\G \mapsto \tau_\Gamma^\ell$ follows from the fact that checking whether $U_x$ has positive rank is Borel.
\end{proof}

\begin{COR}
    The end space maps $E_\star, E_{\ell,\star} : \MG_\star \to \mathcal{K}(\N^\N)$ are Borel.
\end{COR}

\bibliographystyle{amsalpha} 
\bibliography{ref}

\end{document}